\numberwithin{equation}{section} % Number equations within sections (i.e. 1.1,
\numberwithin{figure}{section} % Number figures within sections (i.e. 1.1, 1.2,
\numberwithin{table}{section} % Number tables within sections (i.e. 1.1, 1.2,
\title{Positive factorizations of pseudoperiodic homeomorphisms} % Thick bottom
\author{Pablo Portilla Cuadrado} % Name of the authors
\thanks{The author is supported by CONACYT project wtih No. 286447}
\subjclass[2010]{37E30, 32S25 (primary), and 14B07, 32S30 (secondary)}
\date{\normalsize\today} % Today's date or a custom date
\newtheorem{thm}[equation]{Theorem}
\Crefname{thm}{Theorem}{thm}
\newtheorem{lemma}[equation]{Lemma}
\newtheorem{prop}[equation]{Proposition}
\Crefname{prop}{Proposition}{thm}
\newtheorem{cor}[equation]{Corollary}
\theoremstyle{definition}
\newtheorem{definition}[equation]{Definition}
\newtheorem{notation}[equation]{Notation}
\Crefname{notation}{Notation}{notation}
\newtheorem{remark}[equation]{Remark}
\newcommand{\RR}{\mathbb{R}}
\newcommand{\QQ}{\mathbb{Q}}
\newcommand{\C}{\mathbb{C}}
\newcommand{\NN}{\mathbb{N}}
\newcommand{\ZZ}{\mathbb{Z}}
\newcommand{\Si}{\Sigma}
\newcommand{\id}{\mathop{\mathrm{id}}\nolimits}
\newcommand{\calB}{\mathcal{B}}
\newcommand{\calC}{\mathcal{C}}
\newcommand{\calX}{\mathcal{X}}
\newcommand{\dehn}{\mathrm{Dehn}^+_{g,r}}
\newcommand{\veer}{\mathrm{Veer}_{g,r}}
\newcommand{\modgr}{\mathrm{Mod}_{g,r}}
\newcommand{\pmodgr}{\mathrm{PMod}_{g,r}}
\newcommand{\fr}{\mathrm{fr}}
\newcommand{\sn}{\mathrm{sc}}
\DeclarePairedDelimiter\floor{\lfloor}{\rfloor}
\DeclarePairedDelimiter{\ceil}{\lceil}{\rceil}
\newcommand{\htt}{\hat{t}}
\begin{document}
\maketitle

\begin{abstract}
We generalize a classical result concerning smooth germs of surfaces, by proving that monodromies on links of isolated complex surface singularities associated with reduced holomorphic map germs admit a positive factorization. As a consequence of this and a topological characterization of these monodromies by Anne Pichon, we conclude that a pseudoperiodic homeomorphism on an oriented surface with boundary with positive fractional Dehn twist coefficients and screw numbers, admits a positive factorization.  We use the main theorem to give a sufficiency criterion for certain pseudoperiodic homeomorphisms with negative screw numbers to admit a positive factorization.
\end{abstract}

\tableofcontents

\section{Introduction}

Although the main result of this paper has its greatest impact in the theory of mapping class groups, its original motivation lies in the field of singularity theory. We describe this motivation first.

 Consider a reduced germ of holomorphic function $f:(\C^2,0) \to (\C,0)$. Therefore, it has a representative defined on a open neighborhood $U$ of $0$ in $\C^2$, with a unique critical point at $0$. For simplicity, let us still denote by $f$ the restriction of this representative to a compact neighborhood $B$ of $0$ in $U$.  Then for a generic linear form $\ell:\C^2 \to \C$ and  a non-zero real number $\epsilon'$ with $|\epsilon'|$ small enough, we have that $$\tilde{f}:=f + \epsilon' \ell: B \to \C$$ is what is usually known as a {\bf morsification} of $f$. That is, a holomorphic map that only has Morse-type singularities with distinct critical values all of them close to $0 \in \C$. On another hand, we have that \begin{equation}\label{eq:locally_trivial_milnorle}
f_{|f^{-1}(\partial D_\delta) \cap B_{\epsilon}}: f^{-1}(\partial D_\delta) \cap B_{\epsilon}  \to \partial D_\delta \end{equation} is a locally trivial fibration for $B_\epsilon \subset \C^2$ a closed ball of small radius $\epsilon>0$ and $ D_\delta \subset \C$ a disk of small  radius $0<\delta<<\epsilon$. This is known as the Milnor fibration on the tube \cite{Mil}.

Since transversality is an open condition, $$\tilde{f}_{| \tilde{f}^{-1}(\partial D_\delta) \cap B_{\epsilon}}: \tilde{f}^{-1}(\partial D_\delta) \cap B_{\epsilon}  \to \partial D_\delta$$ defines a fibration equivalent to \cref{eq:locally_trivial_milnorle} for $\epsilon'$ small enough. We can assume $B\subset B_\epsilon$ so that $\tilde{f}$ has only isolated critical points near the origin $0\in\C^2$. Therefore one can easily deduce that the monodromy associated with $f$ can be written as a composition of right-handed Dehn twists: it admits a positive factorization. This classical line of argument is known as Picard-Lefschetz theory and works well more generally when the ambient space is a  smooth Stein manifold.

Let us replace $\C^2$ by a singular ambient space. Take $X$ to be an isolated complex surface singularity. Let $f:X\to \C$ be a reduced element of the maximal ideal of its local ring of germs of holomorphic functions. Associated with $f$ there is also a locally trivial fibration $$f_{|f^{-1}(\partial D_\delta) \cap X \cap B_\epsilon}: f^{-1}(\partial D_\delta) \cap X \cap B_\epsilon \to \partial D_\delta$$ on a {\em tube}. This one is known as the Milnor-L\^e fibration \cite{Le}. Here $B_\epsilon$ denotes a Milnor ball for $X$, that is, a compact Euclidean ball centered at the base point $x$ of $X$ in an ambient space $\C^p$ of a representative of it, such that all Euclidean spheres contained in this ball and centered at $x$ are transversal to the representative. 

Now we observe that a similar game does not work since a small perturbation $f + \epsilon'\ell$ has some Morse singularities on the smooth part of $ (f+\epsilon'\ell)^{-1}(\partial D_\delta) \cap X \cap B_\epsilon$ {\em and} the singularity defined by $\ell$ at the singular point of $X$. In order to learn more about this topic, one may consult Siersma's paper \cite{Sier} and Tib\v{a}r's paper \cite{Tib}. Therefore, by these methods, we cannot conclude that the monodromy defined by $f$ can be written as a composition of right-handed Dehn twists. To decide if these monodromies admit (or not) positive factorizations is the original motivation of this paper.

We argue as follows to solve this problem. First we consider the minimal resolution $\tilde{X} \to X$ of $X$. This gives us a strongly pseudoconvex surface $\tilde{X}$ with a non-trivial exceptional set without curves of the first kind. For this kind of surfaces, Bogomolov and De Oliveira \cite{Bog} based on previous work by Laufer \cite{Lau,Lau2}, developed methods to prove the existence of a deformation $\omega:\calX \to Q$ of $X$ over a disk $Q$ centered at $0 \in \C$, such that all fibers above points of $Q \setminus \{0\}$ are Stein. As a $C^\infty$-fibration, this deformation is a trivial fibration. We lift $f$ to a holomorphic map on the resolution $\tilde{f}:\tilde{X} \to \C$. Since $\calX$ has the structure of a trival $C^\infty$-fibration, we may extend $\tilde{f}$ from the central fiber to all the fibers $X_t$ of the fibration as complex valued (but not holomorphic) smooth maps $\tilde{f}_t:X_t \to \C$. Here $\tilde{f}_t$ depends smoothly on $t$.

Now we ask the question: Can we {\em correct} $\tilde{f}_t$ so that it becomes holomorphic? Note that $\bar{\partial}\tilde{f}_t$ is small in fibers close to the central fiber of the deformation. We need to solve what is classically known as the $\mathit{\bar{\partial}}$-{\em problem} and get a nice enough bound for the solution. More concretely, we look for smooth functions $u_t:X_t \to \C$ such that $$\bar{\partial} u_t = \bar{\partial} \tilde{f}_t$$ and such that $u_t$ takes small values and its first partial derivatives also take small values. More concretely we want $u_t$ to vary continuously with $t$ in the $C^1$ topology. It turns out that the PDE methods initiated by Kohn and Nirenberg \cite{Kohn} and followed by H\"ormander \cite{Hor} are not enough. These methods give only bounds of the $L^2$-norm of $u$, which does not assure us that $u$ takes small values. A few years later, Henkin \cite{Hen}  developed  {\em integral representation} methods to improve these results. His work was later generalized by Kerzman's paper \cite{Ker} to strongly pseudoconvex subdomains of Stein manifolds.  These {\em explicit} integral solutions and generalizations by Kohn and Folland \cite{Foll} of previous work by Kohn  \cite{KohnI,KohnII} allowed Greene and Krantz to produce stability results for families of strongly pseudoconvex manifolds where the complex structure varies smoothly with the parameter of deformation \cite[Sections 3 and 4]{Greene}. These are the results that we need.

In the end we are able to construct such $u_t:X_t \to \C$ with $\tilde{f}_t-u_t$ a holomorphic map reduced on the boundary of $X_t$ which defines, in restriction to $(\tilde{f}_t-u_t)^{-1}(\partial D_\delta) \cap X_t \cap B_\epsilon$ a locally trivial fibration equivalent to the one defined by the original function $f$ on $f^{-1}(\partial D_\delta)\cap X \cap B_\epsilon$. Since $X_t$ is Stein, it does not contain smooth compact analytic curves without boundary and so we can apply Picard-Lefschetz theory to conclude that our original monodromy admits a positive factorization. This is what we prove in \Cref{thm:positive_factorization} which is the main result of this paper.

In \cite{Pich}, A. Pichon proved, using a previous result by Winters \cite{Win}, a purely topological characterization of the monodromies that appear in links of isolated complex surface singularities associated with reduced holomorphic map germs. This class coincides with the class of pseudoperiodic diffeomorphisms of surfaces with boundary $\phi:\Si \to \Si$  which admit a power $\phi^n$ that is a composition of powers of right handed Dehn twists around disjoint simple closed curves including all boundary components ({\em \`a torsades n\'egatives} in \cite{Pich}). Therefore, proving that these monodromies admit positive factorizations has highly unexpected consequences in the theory of mapping class groups. As a direct corollary we find, for example, that all freely periodic diffeomorphisms with positive fractional Dehn twist coefficients automatically admit positive factorizations. Observe that Honda, Kazez and Mati\'c proved this fact in \cite{KoII} for the punctured torus.

Next we describe the organization of the paper.

\subsubsection{Outline of the paper} We start in \cref{sec:complex_geometry} by recalling some theory and definitions about pseudoconvex manifolds and their deformations. Then we discuss solutions to the $\bar{\partial}$-problem on different settings. We introduce the several norms that we use to properly state the results. After this we give a brief account of different solutions to this problem following the work of Henkin, Kerzman and Kohn. Finally we state a version of two {\em stability} theorems (\Cref{thm:continuous_variation} and \Cref{thm:kohn_stability} in this work) by Greene and Krantz that give a solution to the $\bar{\partial}$-problem for $(0,1)$ forms in smooth families of strongly pseudoconvex manifolds.

Next, in \cref{sec:mapping_class_groups}, we fix notations and conventions about certain topics of the theory of mapping class groups. This is always a necessary step if one wants to make precise statements because conventions highly vary from one author to another. We focus on the notions of fractional Dehn twist coefficient and screw numbers associated with a pseudoperiodic homeomorphism of a surface. We end the section by stating Anne Pichon's characterization of monodromies associated with reduced holomorphic map germs on links of isolated complex surface singularities.

In \cref{sec:main_thm} we start by proving a transversality proposition and then apply the theory developed in the previous sections to the proof of our main result (\Cref{thm:positive_factorization}) following the reasoning described above.

Finally, \cref{sec:consequences} is devoted to the exploration of the consequences of our main result. We start by exploiting a result by Baykur, Monden and Van Horn-Morris in \Cref{prop:pos_fact} to prove that the pure mapping class group (homeomorphisms up to isotopy free on the boundary) is generated as a semi-group by positive Dehn twists except in a few degenerate cases. This leads us to the definition of two invariants (\Cref{def:correcting,def:correcting_semigroup}) that measure the failure of a pseudoperiodic homeomorphism to be in $\dehn$. We use \Cref{prop:pos_fact} together with \Cref{thm:positive_factorization} in order to give  in \Cref{thm:criterion} a sufficiency criterion, under  suitable very general hypotheses for a pseudoperiodic homeomorphism to admit a positive factorization. The criterion roughly says that if the fractional Dehn twist coefficients are big enough with respect to the screw numbers of  certain invariant orbits of curves of the Nielsen-Thurston decomposition of the homeomorphism, then the given pseudoperiodic homeomorphism admits a positive factorization.

\section*{Acknowledgments}

I wish to thank Xavier G\'omez-Mont for inspiring conversations. I am also very thankful to Mohammad Jabbari who is an expert in the $\bar{\partial}$-problem
and gave me many useful references that lead me to find the Greene and Krantz result that I ended up using.

Thanks to Baldur Sigur{\dh}sson who read carefully an early version of
this manuscript and pointed out a gap in a lemma that was placed instead of current \Cref{prop:extension_to_family}. His critics and comments have helped me greatly improve the final manuscript.

Finally I thank the very thorough review made by one of the referees whose numerous remarks helped me to improve many parts of the article as well as to fix some proofs that were insufficiently explained or contained gaps in an early version of this article.

\section{Preliminaries on complex geometry and singularity theory}\label{sec:complex_geometry}

We start by reviewing some theory on pseudoconvex complex manifolds. Let $X$ be a complex manifold. Denoting its tangent bundle by $TX$ we can see its complex structure as an endomorphism $$J:TX \to TX$$ satisfying $J^2=-\id$ and an integrability condition.  Let $\rho:X \to \RR$ be a smooth real valued function. Let $d\rho$ denote the  exterior derivative of $\rho$. The complex structure $J$ allows us to define the complex exterior derivative as $d^{\C} \rho:= d\rho\circ J$. Let $\partial$ and $\bar{\partial}$ be the complex and complex conjugate parts of the exterior derivative. We can write $d= \partial + \bar{\partial}$ and $d^{\C}= i(\partial - \bar{\partial})$.

 We define a $(1,1)$-form $\omega_\rho :=  -dd^\C \rho$, a symmetric bilinear form $g_\rho := \omega_\rho(\cdot, J\cdot)$ and a Hermitian form $h_\rho= g_\rho - i \omega_\rho $. Under suitable circumstances described in the next definition these turn, respectively, into a symplectic form, a Riemannian metric and a Hermitian metric.

\begin{definition}\label{def:spsh_exh}
	Let $X$ be a complex manifold and let $A\subset X$ be an open subset. We say that a smooth function $\rho:X \to \RR$ is {\bf strictly plurisubharmonic} (abbreviated {\bf spsh}) on $A$ if the  symmetric field $g_\rho$ is positive-definite (that is, a Riemannian metric) on $A$. If $g_\rho$ is positive-definite on all $X$, we simply say that $\rho$ is strictly plurisubharmonic.
	
	We say that a smooth function $\rho:X \to \RR$ is an {\bf exhaustion function} if it is proper and bounded from below.
\end{definition}

This allows to define the following classical special kinds of manifolds:

\begin{definition}\label{def:spc_stein}
	We say that a complex manifold $X$ is {\bf strongly pseudoconvex} if it admits an exhaustion function $\rho:X \to \RR$ that is spsh outside a compact set. We say that $X$ is a {\bf Stein manifold} if it admits an exhaustion function which is spsh on all $X$.
	
	We say that $\bar X$ is a {\bf strongly pseudoconvex manifold with boundary} if it is a compact complex manifold with smooth boundary that admits an exhaustion function $\rho: \bar X \to \RR$ which is spsh outside a compact analytic set and such that $\partial \bar X$ is the level set  $\{x \in \bar X:\rho = 0\}$ of $\rho$. We say that $\bar X$ is a {\bf Stein domain} if it is a compact complex manifold with smooth boundary that admits an exhaustion spsh function $\rho: \bar X \to \RR$ such that $\partial \bar X = \{x \in \bar X:\rho = 0\}$.
	
	If a complex analytic space $\bar Y$ with boundary $\partial \bar Y$ satisfies that its boundary is the level set of a spsh function defined on a neighborhood of it, we say  that $\bar Y$ has a {\bf strongly pseudoconvex boundary} or that its boundary is strongly pseudoconvex.
\end{definition}

Observe that if $X$ is a Stein manifold with spsh exhaustion function $\rho:X \to \RR$ and $\{x \in X: \rho = 0\}$ is a regular level set, then $\bar X = \{x \in X: \rho (x) \leq  0\}$ is a Stein domain by definition. Moreover, every Stein domain can be obtained like this since a compact analytic manifold with smooth strongly pseudoconvex boundary can be seen as a complex submanifold of a slightly larger open complex manifold (see \cite[Remark 5.57]{Cie} and \cite{Cat}) and since the property of being strictly plurisubharmonic is stable.

\begin{remark}\label{rem:difference}
	The difference between Stein manifolds and strongly pseudoconvex manifolds is that the latter might have a non-trivial exceptional set, that is, a maximal compact complex analytic subset. And that is the only difference: a strongly pseudoconvex manifold that does not contain compact analytic sets of dimension greater than $0$ is a Stein manifold. A typical example of a strongly pseudoconvex manifold that is not a Stein manifold is the resolution space of a Milnor representative of an isolated singularity, that is, the intersection of a representative of it with an associated Milnor ball.
\end{remark}

We also recall the following alternative definition of strongly pseudoconvex manifolds with boundary:

\begin{definition}[Alternative for strongly pseudoconvex manifolds]\label{def:equivalence_strongly}
	A {\bf strongly pseudoconvex manifold with boundary} is a complex manifold with boundary such that for every point  $z \in  \partial \bar{X}$ there is a compact neighbourhood $U$ of $z$, a strongly pseudoconvex domain $D$ in $\C^n$ and a diffeomorphism $\psi$ from $U$ to $\psi(U)$ such that $\partial \psi (U) \subset \partial D$ and $\psi$ is a biholomorphism from the interior of $U$ to the interior of $\psi(U)$. This is sometimes quoted as ``the points at the boundary of a strongly pseudoconvex manifold with boundary can be presented as strongly pseudoconvex domains in $\C^n$''.
\end{definition}

The above alternative definition is taken from \cite[Definition 3.1 ]{Mari} and that it is implied by the classical definition follows from the Newlander-Nirenberg theorem for complex manifolds with boundary (see in particular \cite[Proposition 1.1]{Cat}). 

\subsection*{Preliminaries about the $\bar{\partial}$-problem}
For each pair of non negative integers $p,q \geq 0$ we denote by $C^\infty_{p,q}(X)$ the space of global differential forms of type $(p,q)$ with coefficients smooth complex valued functions. Then the operator $$\bar{\partial}: C^\infty_{p,q}(X) \to C^\infty_{p,q+1}(X)$$ acts as the complex conjugate part of the exterior derivative and satisfies $\bar{\partial}^2 = 0$.

 Observe that  $C^\infty_{0,0}(X)$ coincides with the set of smooth complex valued functions and that for an element $f \in C^\infty_{0,0}(X)$ the equation $\bar{\partial}f=0$ is satisfied precisely when $f$ is a holomorphic function.
 
 We denote by $C^\infty_p(X)$ space of the differential $p$-forms having smooth complex valued functions as coefficients.

Given an element $g \in C^\infty_{p,q+1}(X)$, it is a classical problem to determine the existence (or lack thereof) of solutions $u \in C^\infty_{p,q}(X)$ to the equation $$\bar{\partial} u = g$$ and to bound the value of some norm on $u$ by the value of some norm on $g$. This problem is known as the $\mathit{\bar{\partial}}$-{\em problem}.

In this work we wish to control the growth of the partial derivatives of {\em a} solution $u$ 
under small smooth perturbations of the complex structure $J$. In order to make sense out of the previous sentence, 
it is necessary to introduce several topologies that are used in the {\em stability} theorems that are invoked
related to this question.  We follow the notations of the article \cite{Greene}. 

Let $D \subset \C^n$ be an open set. For any continuous complex valued function $u:D \to \C$ 
we define the {\bf supremum norm} by
\begin{equation} \label{eq:sup_norm}
    ||u||_\infty:=\sup_{z \in D} |u(z)|
\end{equation} 
where $|\cdot|$ denotes the usual complex modulus. The next is called the {\bf $\bm{C^j}$ norm} and measures the size of the partial
derivatives of $u$ up to the $j$-th order. It is naturally defined for all $C^j$-functions and can possibly be infinite.
\begin{equation}\label{eq:cj_norm}
||u||_{C^j_D}:= \sum_{|\alpha|+|\beta| \leq 
j}\left|\left|\left(\frac{\partial}{\partial z}\right)^\alpha 
\left(\frac{\partial}{\partial \bar{z}}\right)^\beta 
u\right|\right|_\infty
\end{equation}
 where $\alpha$ and $\beta$ are meant in the usual multi-index notation and 
 $|\alpha|$ and $|\beta|$ are their total orders. This norm defines the {\bf $\bm{C^j}$ topology} in the linear space of $C^j$ functions $u:D \to \C$ for which $||u||_{C^j(U)}$ is finite. Similarly we can define this norm for the spaces $C^\infty_{p,q}(D)$ and $C^\infty_p(D)$ by observing that the coefficients of the forms in these spaces are smooth complex valued functions. An alternative definition is given by taking the maximum over all $|\alpha|+|\beta| \leq j$ (instead of summing). When $D$ is relatively compact, this definition gives an equivalent norm.

Similarly, there is a distance function on the space $C^\infty_{0}(D)$ that defines the $C^\infty$-topology. This is defined for $f,g \in C^\infty_0(D)$ by 
\begin{equation}\label{eq:infty_norm}
\begin{split}
	d(f,g):= & \sum_{k=0}^\infty  \sum_{|\alpha|+|\beta| \leq k} \frac{1}{ k! (2n)! 2^{k+1}} \\ & \times \left|\left|\left(\frac{\partial}{\partial z}\right)^\alpha 
	\left(\frac{\partial}{\partial \bar{z}}\right)^\beta 
	(f-g)\right|\right|_\infty \left/  \left(1+ \left|\left|\left(\frac{\partial}{\partial z}\right)^\alpha 
	\left(\frac{\partial}{\partial \bar{z}}\right)^\beta 
	(f-g)\right|\right|_\infty \right)\right.
	\end{split}
\end{equation}

The previous norms and metric are defined  for
open sets in $\C^n$. The supremum norm defined in \cref{eq:sup_norm} is not coordinate dependent so its generalization to a complex manifold is exactly the same. On the other hand, the norm defined in \cref{eq:cj_norm} depends on coordinates but we can still generalize it to smooth complex valued functions on manifolds. Let $X$ be a compact manifold and fix  a finite collection of coordinate charts $\{(W_i,\psi_i)\}_{1 \leq i \leq \ell}$ that covers $X$. Let $u:X \to \C$ be a $C^j$ complex valued function, then we define:
\begin{equation}\label{eq:cj_norm_man}
||u||_{C^j_X}:= \max_{ 1\leq i \leq \ell}||u \circ \psi_i^{-1}||_{C^j_{W_i}}\end{equation}
In particular, this defines a norm in the space of smooth functions in compact analytic manifolds with boundary $\bar X$ which in turn induces a topology: the $C^j$ topology. Other equivalent norms appear in the literature: one may sum the norm over all the coordinate charts (instead of taking the max), or one may take  a partition of unity subordinated to the finite collection of coordinate charts and sum over all of them, by weighting  with the partition of unity functions as coefficients. Also, since $\bar X$ is compact, different choices of coordinate charts give equivalent norms.

This may be generalized to the space of sections of any vector bundle (see for example \cite[Definition 3.3]{Witt}) to define the usual $C^j$ topologies. In particular we can define the $C^j$ topology in the spaces $C^j_{p,q}(\bar
X)$ or $C^\infty_{p,q}(\bar{X})$.
 
 Similarly, we can define the {\bf $\bm{C^\infty}$ topology} on the spaces $C^\infty_{p,q}(\bar{X})$ and $C^\infty_{p}(\bar{X})$ by extending the definition of $d(f,g)$ to functions defined on a manifold via a fixed coordinate atlas. We also observe that if two functions $f,g:X \to \C$ satisfy that $||f-g||_{C^j_X}$ is small, it implies that their partial derivatives are close to each other up to the $j$-th order.

Finally, in order to make precise statements about perturbations of complex structures, we need to state which topology we are putting on the space of complex structures $\mathcal{J}(\bar{X})$ of an even-dimensional compact manifold with boundary. As we said in the beginning of the section, a  complex structure is an endomorphism $J:TX \to TX$ satisfying $J^2=-\id$ and an integrability condition. Thus, in particular it is a $(1,1)$ tensor. We simply take the $C^j$ and $C^\infty$ topologies on the space of $(1,1)$ tensors seen as the space of smooth sections of $T^*\bar{X} \otimes T\bar{X}$ (again we refer to \cite{Witt} for these definitions).  Consider the  induced topology on the subspace of complex structures. See \cite[page 34-35]{Greene} for a similar discussion in the more general setting of almost-complex structures.

\subsection*{Solutions to the $\bar\partial$-problem}
As we said in the introduction, the original work by Nirenberg and H\"ormander gave a solution to the $\bar{\partial}$-problem with estimates in the $L^2$-norm which are not enough for the purposes of this work.

Henkin in \cite{Hen}, using {\em integral methods}, gave a solution to the  $\bar{\partial}$-problem for $(0,1)$ forms in a bounded strongly pseudoconvex domain $D\subset \C^n$ with smooth boundary $\partial \bar D$. This is usually called the {\bf Henkin solution}. It has the following form:
\begin{equation}\label{eq:henkin_solution}
\begin{split}
u(z):=  \frac{(2n-1)!}{(2 \pi i)^n} &\left( \int_{\partial D \times [0,1]} \left< f \cdot d\bar\xi \right> \wedge \omega'(\eta) \wedge \omega(\xi) \right. \\
 & \left. -  \int_{D} \frac{\left< f \cdot (\bar{\xi}- \bar{z}) \right>}{\xi -z}  \omega(\bar{\xi}) \wedge \omega(\xi) \right)
\end{split}
\end{equation}
where
\begin{enumerate}
	\item $\omega'(\eta)= \sum_{k=1}^n (-1)^{k-1} \eta_k d\eta_1 \wedge \cdots \wedge d\eta_{k-1} \wedge d \eta_{k+1} \wedge \cdots \wedge d \eta_{n},$
	\item $\omega(\xi)=d\xi_1 \wedge \cdots \wedge d \xi_n,$
	\item $\left< \xi \cdot \eta \right> = \sum_{k=1}^n \xi_k \cdot \eta_k,$
	\item $\eta_k = \lambda \frac{\bar{\xi}_k - \bar{z}_k}{|\xi - z|^2} + (1-\lambda) \frac{p_k (z, \xi)}{\Phi(z, \xi)}$ for $k=1, \ldots, n$ with $\xi \in \partial D$ and $\lambda \in [0,1]$ (note that the parameter $\lambda$ does not explicitly appear in \cref{eq:henkin_solution} but $\eta$, which does appear, depends on $\eta_k$ which in turn depends on $\lambda$),
	\item $\Phi(z, \xi)=\sum_{i=1}^n(\xi_i-z_i)(p_i(z,\xi))$ (see \cite[d) in  p. 275]{Hen}) and satisfies certain other properties that are derived from \cite[Lemmas 2.4 and 2.7]{Hen2},
	\item $p_i(z, \xi)$ are certain well-defined functions which are holomorphic with respect to $z \in D_\delta:=\{z\in D: \rho(z)<\delta\}$ (where $\rho$ is an exhausting function for $D$ which is spsh near the boundary) for fixed $\xi$ and continuously differentiable with respect to $\xi \in \partial D$ for fixed $z$.
\end{enumerate}
	
   This function $u(z)$ (\cref{eq:henkin_solution}) satisfies that it is a smooth solution for the $\bar{\partial}$-problem $\bar{\partial}u = f$ and that there is a constant $K$ depending on $D$ such that $||u||_{\infty} \leq K ||f||_{\infty}$. Note that it is {\em canonical} because it is {\em explicitly} given by an integral. This is the main point that makes this result to be considered a breakthrough on the topic.

Later, in \cite{Leit}, Henkin and Leiterer proved global integral formulas that give an intrinsic solution for the more general case of Stein manifolds. However, this is not the approach to the manifold case that we follow in this work. See also \cite[Chapter 4]{Genn} for a compilation of results in this direction.

Before that, in \cite{Ker}, Kerzman proved the existence of solutions in the Stein manifold 
case with a uniform bound of the solution. In that work, first the result is proven  for 
{\em strongly pseudoconvex domains} (which he defines to be open sets in $\C^n$ which 
are Stein manifolds) \cite[Theorem 1.2.1]{Ker}. Then he notes that this result 
is as well valid for the case of strictly pseudoconvex domains {\em in} Stein 
manifolds \cite[Theorem 1.2.1'',   pg. 309]{Ker}.

\begin{thm}\label{thm:kerzman}
	Let $\bar X$ be a strongly pseudoconvex domain in a Stein manifold  and let $g  \in C^\infty_{0,1}(X)$ be a bounded $(0,1)$-form with $\bar \partial g = 0$. Then the equation $\bar{\partial} u = g$ has a bounded solution $u \in C^\infty_{0,0}(\bar X)$ with \begin{equation}
		||u||_\infty \leq K ||g(x)||_\infty
	\end{equation}
	where $K$ is a constant depending only on  $\bar X$.
\end{thm}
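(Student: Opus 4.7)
The strategy is to reduce to the Euclidean Henkin formula \cref{eq:henkin_solution}, which already provides the desired sup-norm estimate for strongly pseudoconvex domains in $\C^n$. The task is to transfer this from local Euclidean pieces to the global manifold setting while keeping control of the sup-norm.

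First I would invoke the embedding theorem of Remmert-Bishop-Narasimhan to realize the ambient Stein manifold $Y \supset \bar X$ as a closed complex submanifold of some $\C^N$. By the Docquier-Grauert theorem for Stein submanifolds of Stein manifolds, $Y$ admits an open Stein neighborhood $V \subset \C^N$ together with a holomorphic retraction $r : V \to Y$.

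Then I would lift the problem into $\C^N$: set $\tilde g := r^* g$ on a neighborhood of $\bar X$ in $V$. Because $r$ is holomorphic, $r^*$ commutes with $\bar\partial$, so $\bar\partial \tilde g = 0$; and $||\tilde g||_\infty \leq C_1\, ||g||_\infty$, with $C_1$ controlled by the Jacobian of $r$ along $\bar X$. Next I would build a bounded strongly pseudoconvex domain $\tilde X \subset V$ containing $\bar X$ by thickening: take as defining function a combination of a spsh defining function for $\bar X$ (extended through $r$) with a convex increasing function of the squared distance from $Y$ in $\C^N$. Applying Henkin's formula \cref{eq:henkin_solution} on $\tilde X$ to $\tilde g$ yields $\tilde u \in C^\infty(\tilde X)$ with $\bar\partial \tilde u = \tilde g$ and $||\tilde u||_\infty \leq K_{\tilde X}\, ||\tilde g||_\infty$. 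Restricting $u := \tilde u|_{\bar X}$ then satisfies $\bar\partial u = g$ (since $r|_Y = \id$, so $\tilde g$ restricts to $g$ on $\bar X$) and $||u||_\infty \leq K\, ||g||_\infty$ with $K := K_{\tilde X} \cdot C_1$.

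The main obstacle is verifying that the thickening $\tilde X$ can be chosen so that the Henkin constant $K_{\tilde X}$ is controlled by data intrinsic to $\bar X$, rather than by incidental features of the embedding into $\C^N$. Concretely, one must check that the Levi form of the defining function of $\tilde X$ can be made uniformly bounded below, both tangentially along $\bar X$ and in directions normal to $Y$; the explicit nature of the Cauchy-Fantappi\`e kernels in \cref{eq:henkin_solution} then propagates this Levi bound into a sup-norm estimate whose constant depends only on the geometry of $\bar X$. An alternative would bypass the embedding and instead patch local Henkin solutions on a finite cover of $\partial \bar X$ by charts provided by \Cref{def:equivalence_strongly}, correcting the overlapping holomorphic differences by a Cousin-type argument with sup-norm control, but this route faces the same essential difficulty of propagating the uniform estimate across the gluing.
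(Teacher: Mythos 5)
The paper does not prove this statement; it is quoted from Kerzman \cite[Theorem 1.2.1'']{Ker}, so there is no in-text argument to compare against and your sketch has to be judged on its own terms.

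Your main route has a gap that is more basic than the one you flag. The pullback $\tilde g := r^* g$ is defined exactly on $r^{-1}(X)$, because $g$ is given only on $X$. You then ask for a strongly pseudoconvex $\tilde X \subset V \subset \C^N$ that \emph{contains} $\bar X$ and on which $\tilde g$ is a bounded $\bar\partial$-closed form. But near any $p \in \partial \bar X$ such a $\tilde X$ contains a full ball around $p$ in $\C^N$, while $r$ maps part of every such ball to $Y \setminus \bar X$ (since $r|_Y=\id$ and $p$ lies on the boundary of $\bar X$ in $Y$); hence $r^{-1}(X)$ is not a neighborhood of $p$, and $\tilde g$ is simply not defined on $\tilde X$. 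The retraction does not give you a free extension: a bounded $\bar\partial$-closed $(0,1)$-form on $X$ need not extend as a bounded $\bar\partial$-closed form to any strictly larger domain, and that is the real difficulty here. (The variant where $\tilde X$ is the pinched tube $\{\rho\circ r + C\,d(\cdot,Y)^2 < 0\}$ keeps $\tilde g$ defined, but then $\tilde X$ does not contain $\bar X$, and its defining function is not obviously strictly plurisubharmonic along the part of the boundary lying over the interior of $X$.) By contrast, your concern about making $K_{\tilde X}$ ``intrinsic to $\bar X$'' is not a genuine obstruction: the theorem only asserts $K$ depends on $\bar X$, and the embedding, retraction and thickening are all determined once $\bar X$ is fixed, so whatever constant comes out is admissible. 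The alternative you mention at the very end --- chart-by-chart Henkin solutions along a cover of $\partial \bar X$ as in \Cref{def:equivalence_strongly}, with the gluing discrepancy corrected by a merely qualitative bounded solvability in the interior --- is essentially the route Kerzman takes, and the Cousin-type difficulty there is milder than you suggest because the residual after a partition-of-unity sum is again a bounded $\bar\partial$-closed $(0,1)$-form to which a non-quantitative solution operator can be applied.
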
 

Another interesting result that we need for the next subsection was achieved by Kohn. We refer to \cite{Foll} as the primer reference on this topic and \cite{Shaw} as a more recent book collecting those and more recent results. In particular, we are interested in the smooth {\bf Kohn solution} for strongly pseudoconvex manifolds (\cite[Corollary 5.3.11]{Shaw}):

\begin{thm}\label{thm:kohn_result}
	Let $\bar{X}$ be a strongly pseudoconvex manifold equipped with a Hermitian metric. Let $g \in C^\infty_{0,1}(\bar{X})$ with $\bar{\partial} g = 0$. Then there exists a unique $u \in C^\infty_{0,0}(\bar{X})$ such that $\bar{\partial}u=g$ and $u$ is orthogonal to $\ker(\bar{\partial})$. 
\end{thm}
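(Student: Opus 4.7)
The plan is to set up the $\bar\partial$-Neumann framework and produce the canonical solution as $u:=\bar\partial^{*}Ng$, where $N$ is the $\bar\partial$-Neumann operator on $(0,1)$-forms. Using the fixed Hermitian metric, I would first introduce the $L^{2}$-spaces $L^{2}_{0,q}(\bar X)$ and the maximal closed extension of $\bar\partial\colon L^{2}_{0,0}(\bar X)\to L^{2}_{0,1}(\bar X)$, together with its Hilbert-space adjoint $\bar\partial^{*}$. The crucial feature is that $\mathrm{Dom}(\bar\partial^{*})$ on $(0,1)$-forms encodes a non-coercive boundary condition: a smooth $(0,1)$-form $f$ belongs to it if and only if $f$ is orthogonal to $\bar\partial\rho$ along $\partial\bar X$, for any defining function $\rho$ of the boundary.

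The analytic core is Kohn's subelliptic estimate on strongly pseudoconvex manifolds: there exists $C>0$ such that
\[
\|f\|_{1/2}^{2}\leq C\bigl(\|\bar\partial f\|^{2}+\|\bar\partial^{*}f\|^{2}+\|f\|^{2}\bigr)
\]
for every $f\in\mathrm{Dom}(\bar\partial^{*})\cap C^{\infty}_{0,1}(\bar X)$, where $\|\cdot\|_{1/2}$ denotes the tangential Sobolev $1/2$-norm. One derives this by expanding $\|\bar\partial f\|^{2}+\|\bar\partial^{*}f\|^{2}$ via the Kohn--Morrey identity and observing that the boundary integrand is bounded below by the Levi form, which is positive definite by strong pseudoconvexity. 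From this estimate one concludes that the complex Laplacian $\Box=\bar\partial\bar\partial^{*}+\bar\partial^{*}\bar\partial$ has closed range on $L^{2}_{0,1}(\bar X)$ with finite-dimensional kernel $\mathcal{H}^{0,1}$, and a Bochner--Kodaira computation using the strict positivity of the Levi form forces $\mathcal{H}^{0,1}=\{0\}$. Hence the $\bar\partial$-Neumann operator $N$ inverting $\Box$ exists and is continuous on $L^{2}_{0,1}(\bar X)$.

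I now set $u:=\bar\partial^{*}Ng$. This element lies in the range of $\bar\partial^{*}$ and is therefore orthogonal to $\ker(\bar\partial)$. From the Hodge-type identity $g=\Box Ng=\bar\partial\bar\partial^{*}Ng+\bar\partial^{*}\bar\partial Ng$ combined with $\bar\partial g=0$, a short argument using that $\bar\partial$ and $N$ commute on the relevant domain shows $\bar\partial^{*}\bar\partial Ng=0$, so $\bar\partial u=g$. Uniqueness is immediate: any two solutions differ by an element of $\ker(\bar\partial)$, which orthogonality to $\ker(\bar\partial)$ forces to vanish.

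The main obstacle is upgrading $u$ from $L^{2}$ to $C^{\infty}(\bar X)$. In the interior this is standard elliptic regularity for $\Box$, but at the boundary one must contend with the non-Dirichlet nature of the $\bar\partial$-Neumann boundary condition. The classical route is the elliptic regularization of Kohn--Nirenberg: replace $\Box$ by $\Box+\epsilon\Delta$ with appropriate Dirichlet data to restore genuine ellipticity up to the boundary, derive subelliptic estimates uniform in $\epsilon$ in every Sobolev norm $H^{s}$, and pass to the limit $\epsilon\to 0$. This yields global regularity $N\bigl(C^{\infty}_{0,1}(\bar X)\bigr)\subset C^{\infty}_{0,1}(\bar X)$, whence $u\in C^{\infty}_{0,0}(\bar X)$, finishing the proof.
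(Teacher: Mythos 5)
The paper does not prove this theorem; it quotes it from Chen--Shaw (Corollary 5.3.11) and Folland--Kohn, so what you are reconstructing is the classical $\bar\partial$-Neumann argument, and your overall plan is the correct one and coincides with what those references do: build the $\bar\partial$-Neumann operator $N$ on $(0,1)$-forms from the Kohn--Morrey basic identity and the resulting subelliptic $\tfrac12$-estimate, put $u:=\bar\partial^{*}Ng$, verify $\bar\partial u=g$ and $u\perp\ker\bar\partial$, obtain uniqueness from orthogonality, and get $C^{\infty}$ regularity up to the boundary via Kohn--Nirenberg elliptic regularization. Those parts of your sketch are sound.

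There is, however, one step that does not hold in the stated generality. You claim that a Bochner--Kodaira computation ``forces $\mathcal{H}^{0,1}=\{0\}$''. This is true for strongly pseudoconvex domains in $\mathbb{C}^{n}$, and more generally for Stein domains with a suitable metric, where the curvature contributions in the basic identity are absent or have a favourable sign. But for a general strongly pseudoconvex manifold with boundary in the sense of \Cref{def:spc_stein} --- which is allowed to carry a nontrivial exceptional set (\Cref{rem:difference}) --- the basic identity contains interior curvature terms of indefinite sign, and the subelliptic estimate only yields that $\mathcal{H}^{0,1}$ is \emph{finite-dimensional}, not that it vanishes. Concretely, when $\bar X$ is the resolution of a normal surface singularity, which is exactly the situation in which the paper applies this theorem, one has $\dim_{\mathbb{C}}\mathcal{H}^{0,1}=p_{g}$, the geometric genus, and this is positive for non-rational singularities. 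So the unconditional solvability asserted in your step (and, to be fair, implicit in the statement of the theorem as printed in the paper) is not correct: solvability of $\bar\partial u=g$ requires the additional hypothesis $g\perp\mathcal{H}^{0,1}$, and the Neumann operator $N$ is defined on the orthogonal complement of $\mathcal{H}^{0,1}$ (or extended by $N|_{\mathcal{H}^{0,1}}=0$). This imprecision is harmless for the paper's use of the theorem in \Cref{prop:extension_to_family}, because there one feeds in $g_{t}=\bar\partial_{t}f$, which is $\bar\partial_{t}$-exact by construction and hence automatically orthogonal to the harmonic space; but your proof should not assert $\mathcal{H}^{0,1}=\{0\}$ as a consequence of strong pseudoconvexity alone. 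Apart from this overreach, the construction of $u$, the identity $\bar\partial^{*}\bar\partial Ng=0$ (via $\bar\partial N_{1}=N_{2}\bar\partial$ or by pairing against $\bar\partial Ng$), the orthogonality and uniqueness argument, and the boundary regularity via elliptic regularization are all correct and are the standard route.
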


\begin{remark}\label{rem:unique_kohn}
 In particular if $g=0$ then $u$ is $0$ (observe that any holomorphic function is {\em a} solution when $g=0$). We refer to \cite{Shaw} for more on this topic.
\end{remark}

\subsection*{Stability results for the $\bar{\partial}$-problem}
A natural generalization of 
these estimates consists of the determination of the regularity of the solutions to the 
$\bar{\partial}$-problem in terms of the regularity of a family of complex 
structures on a given strongly pseudoconvex space. In other words: if we consider a family of complex structures $J_t$ 
that varies smoothly with $t$ (all of them making $\bar{X}$ strongly pseudoconvex), in which topologies
do the solutions (with respect to those complex structures) move 
continuously?

This question is solved in different settings in \cite{Greene}. For instance they do it for the {\em Kohn solution} (in Section 3) and for the {\em Henkin solution} (in Section 4). In this subsection we state two of these stability results (one for each solution) and express them in a language useful for our purposes. 

We review first the stability results for the Henkin solution. As they explain in the introduction to Section 4 therein, {\em the Henkin solution
moves continuously in the $C^k$ topology provided that the complex structures vary sufficiently smoothly}.

 First,
\cite[Theorem 4.14, eq. 4.14.3']{Greene} proves a similar result for strongly 
pseudoconvex domains (relatively compact open sets with strongly pseudoconvex 
boundary) in 
$\C^n$. In the remark at the end of that same page it is explained that ``with some additional effort (see \cite{Ker}) the Henkin solution may
be locally transferred, via coordinate maps, to strongly pseudoconvex
manifolds and patched together''. 

  Then, in that same remark, Greene and Krantz explain that in this case, one must use the Sobolev embedding theorems to obtain estimates with respect to the $C^k$ norm rather than estimates in the Sobolev norms (which we don't use in this work). 
  
 A first inspection of that paper shows that the domain of the map of \cite[Theorem 4.14, eq. 4.14.3']{Greene} is \begin{equation}\label{eq:domain} \{\text{strongly pseudoconvex subdomains of $\hat{D}$} \}\times A_{0,1}(\hat{D})\end{equation} where $\hat{D}$ is an open neighborhood of a strongly pseudoconvex domain $D$ with smooth boundary and $A_{0,1}(\hat{D})$ is the space of closed $(0,1)$-forms. That is, the theorem is stated for deformations of  the strongly 
pseudoconvex domain within a given open neighborhood of a previously fixed strongly pseudoconvex domain. We need to define a  space different from that of  \cref{eq:domain} to make sense of the version of this result that we are going to use. We do this in the following remark.

\begin{remark}\label{rem:good_domain}
Eventually we are going to deform the complex structure. Hence, in our case, it does not make sense to use the space $A_{0,1}(\bar{X})$ of $\bar{\partial}$-closed $(0,1)$ forms, rather we need to consider the space of forms that are closed for {\em each} complex structure that we are considering on $\bar X$. Let $J_0$ be a complex structure that makes $\bar{X}$ strongly pseudoconvex. Since strong pseudoconvexity is an open property, there is a $C^\infty$ neighborhood of $J$ in the space of complex structures $\mathcal{J}(\bar{X})$ such that all the complex structures in this neighborhood make $\bar{X}$ strongly pseudoconvex.  Let $\mathcal{J}(\bar{X},J_0)$ be such a neighborhood. Consider the  product space $\mathcal{J}(\bar{X}, J_0) \times C^\infty_{1}(\bar{X})$ of the previous neighborhood and the space $C^\infty_{1}(\bar{X})$ of smooth differential $1$-forms with smooth complex valued functions as coefficients. And we define the subspace \begin{equation}
\begin{split}
\mathcal{JA}_{0,1}(\bar X,J_0):=& \{(J, g) \in \mathcal{J}(\bar{X}, J_0) \times C^\infty_{1}(\bar{X}):  \text{$g$ is a $(0,1)$-form with} \\ & \text{respect to $J$ and }\bar{\partial}_J g =0\}.\end{split}
\end{equation} 
This space has points that are pairs formed by a complex structure that makes $\bar{X}$ strongly pseudoconvex and a form that is both: a  $(0,1)$ form and is closed with respect to that complex structure. Hence, it makes sense trying to solve the problem $\bar{\partial}_J u =g$ for $(J,g)$ in this space. Whenever we put the product topology $C^k \times C^s$ on $\mathcal{J}(\bar{X}) \times C^\infty_{1}(\bar{X})$, it induces a topology on $\mathcal{JA}_{0,1}(\bar X, J_0)$  as a subspace. We call it the {\bf $\bm{C^{k,s}}$ topology}. 
\end{remark}

With the previous discussion in mind, we are ready to formulate a particular version of  \cite[Theorem 4.14]{Greene}:

\begin{thm}\label{thm:continuous_variation}
    Let $\bar X$ be a strongly pseudoconvex manifold with boundary with complex structure $J_0$. Then the map $$\mathcal{JA}_{0,1}(\bar X,J_0) \to C^1_{0}(\bar X)$$ that assigns to each $(J,g) \in \mathcal{JA}_{0,1}(\bar X)$ the patched Henkin solution $u=H_Jg$ to the problem $\bar{\partial}_J u = g$, is continuous from the $C^{j(k,n),k}$ topology to the $C^{k+ 1/2}$ topology. Here $j(k,n)$ is a function depending on $k$ and the dimension $n$ of $\bar{X}$ that increases with $k$ and $n$.
\end{thm}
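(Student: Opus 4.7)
The plan is to reduce this to \cite[Theorem 4.14, eq. 4.14.3']{Greene}, which is the same continuity statement but for strongly pseudoconvex subdomains of a fixed open neighborhood in $\mathbb{C}^n$, by the local-transfer and patching procedure that Greene--Krantz themselves sketch in the remark following that theorem and which is attributed to \cite{Ker}.

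First I would cover the compact manifold $\bar X$ by a finite collection of relatively compact coordinate patches $\{(U_\alpha, \psi_\alpha^{J_0})\}$ chosen so that each patch meeting $\partial \bar X$ realizes $U_\alpha \cap \bar X$ as a strongly pseudoconvex subdomain of a fixed open set $W_\alpha \subset \mathbb{C}^n$, in the sense of \Cref{def:equivalence_strongly}. For $J$ in a small $C^{k_0}$-neighborhood of $J_0$, a parametrized version of the Newlander--Nirenberg theorem produces $J$-holomorphic charts $\psi_\alpha^J : U_\alpha \to W_\alpha$ that depend continuously on $J$ in the $C^{k_0-c(n)}$-topology for a fixed loss $c(n)$ of derivatives; in these new coordinates, $U_\alpha \cap \bar X$ is again a strongly pseudoconvex subdomain of $W_\alpha$, now moving continuously with $J$.

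Next, on each patch, push forward $g$ via $\psi_\alpha^J$ to a $\bar\partial_{\mathrm{std}}$-closed $(0,1)$-form $g_\alpha^J$ on the varying subdomain $\psi_\alpha^J(U_\alpha \cap \bar X)$, apply \cite[Theorem 4.14]{Greene} to obtain a local Henkin solution $v_\alpha^J$ of $\bar\partial_{\mathrm{std}} v_\alpha^J = g_\alpha^J$, and pull back to $u_\alpha^J := v_\alpha^J \circ \psi_\alpha^J$. Combining that theorem with the derivative loss in the previous step, $u_\alpha^J$ depends continuously on $(J,g)$ from some $C^{j_0(k,n),k}$-topology into the $C^{k+1/2}$-topology. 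Fix a smooth partition of unity $\{\chi_\alpha\}$ subordinate to $\{U_\alpha\}$ and set $\tilde u^J := \sum_\alpha \chi_\alpha\, u_\alpha^J$, so that, using $\sum_\alpha \bar\partial_J \chi_\alpha = 0$,
\begin{equation*}
\bar\partial_J \tilde u^J \;=\; g \;+\; \sum_{\alpha,\beta}(\bar\partial_J \chi_\alpha)(u_\alpha^J - u_\beta^J).
\end{equation*}
On each overlap $u_\alpha^J - u_\beta^J$ is $J$-holomorphic, so the correction form $\eta^J$ on the right is $\bar\partial_J$-closed, compactly supported away from $\partial \bar X$, and depends continuously on $(J,g)$ with one extra degree of regularity. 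Solving $\bar\partial_J w^J = \eta^J$ by the interior/global integral-kernel construction (appealing once more to \cite[Theorem 4.14]{Greene} on a chart engulfing the support of $\eta^J$, or directly to \Cref{thm:kerzman}) and setting $H_J g := \tilde u^J - w^J$ produces the patched Henkin solution with the claimed continuity.

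The main obstacle is pure bookkeeping: I would need to track how many derivatives of $J$ are consumed at each step (the parametrized Newlander--Nirenberg, the passage from Sobolev-norm to $C^{k+1/2}$ H\"older estimates via the Sobolev embeddings pointed out in the remark after \cite[Theorem 4.14]{Greene}, and the patching correction) and then define $j(k,n)$ so as to absorb all those losses. Once $j(k,n)$ is chosen large enough relative to $k$ and $n$, each step above is continuous in the required topologies and the theorem follows.
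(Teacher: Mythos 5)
The paper itself does \emph{not} prove this theorem. Immediately after stating it, the author writes that ``the above result is not explicitly proven in \cite{Greene}, rather a weaker version (in strongly pseudoconvex domains in $\C^n$) is worked out and then a remark is made about the generalization,'' and that ``since we could not find a reference for a proof of that theorem and we do not know how this \emph{patching} of the Henkin solution is performed (presumably it uses \Cref{def:equivalence_strongly}), we have decided to include another result \ldots\ for which there is a proof.'' The paper then switches entirely to the Kohn-solution stability theorem (\Cref{thm:kohn_stability} and \Cref{thm:kohn_families}) and never relies on \Cref{thm:continuous_variation} again. So your attempt is not being compared against a proof in the paper; you are attempting something the author explicitly chose not to do.

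Your outline is the natural one and is essentially what the Greene--Krantz remark gestures at, but it has a genuine circularity at the correction step. After patching you are left needing to solve $\bar\partial_J w^J = \eta^J$ \emph{on all of} $\bar X$, with $C^{k+1/2}$ estimates \emph{uniform in $J$}. You propose to do this ``on a chart engulfing the support of $\eta^J$,'' but the support of $\eta^J$ is spread over all the chart overlaps and there is in general no single coordinate chart containing it; and the fallback, \Cref{thm:kerzman}, only yields a $\sup$-norm bound with a constant ``depending only on $\bar X$'' (i.e.\ on its complex structure), so it gives neither the $C^{k+1/2}$ regularity you need nor any control as $J$ varies. In other words, the correction step is a global $\bar\partial_J$-problem with stability on a strongly pseudoconvex \emph{manifold}, which is precisely the statement you are trying to prove. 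The standard way out (Kerzman's approach) is an iteration: the error $\eta^J$ gains regularity because differences of local Henkin solutions are holomorphic, and one sets up a Neumann-series / parametrix argument whose convergence must then be shown to be uniform in $J$. You hint at the extra regularity but do not set up the iteration, and the uniformity in $J$ of that iteration is exactly the delicate point. A secondary, smaller gap is the parametrized Newlander--Nirenberg theorem for manifolds with boundary: the paper's \Cref{def:equivalence_strongly} cites the non-parametrized version, and continuity of the $J$-holomorphic charts $\psi_\alpha^J$ in $J$ (with a quantified derivative loss) is a nontrivial claim that needs its own reference or argument. Also, your displayed correction formula is missing the cutoff factor: with $\sum_\beta \chi_\beta = 1$, the identity is $\sum_\alpha(\bar\partial_J\chi_\alpha)u_\alpha^J = \sum_{\alpha,\beta}\chi_\beta(\bar\partial_J\chi_\alpha)(u_\alpha^J - u_\beta^J)$, and the $\chi_\beta$ is what localizes each summand to the overlap $U_\alpha\cap U_\beta$.

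Given these gaps, you should probably do what the paper does: state the Henkin-solution stability result as a plausible conjecture and instead prove and use the Kohn-solution stability (\Cref{thm:kohn_stability}), which avoids the patching issue because the Kohn solution is defined globally and intrinsically via the $\bar\partial$-Neumann problem, with stability supplied directly by \cite{Foll} and \cite[Theorem 3.10]{Greene}.
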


The above result is not explicitly proven in \cite{Greene}, rather a weaker version (in strongly pseudoconvex domains in $\C^n$) is worked out and then a remark is made about the generalization. 

Since we could not find a reference for a proof of that theorem and we do not know how this {\em patching} of the Henkin solution is performed (presumably it uses \Cref{def:equivalence_strongly}),  we have decided to include another result (that appears in the same paper) but for which there is a proof. This result is similar in spirit but involves the {\em Kohn solution} instead of the Henkin solution.

Hence we turn our attention to the stability of the Kohn solution under perturbations of the complex structure. Here \cite[Theorem 3.10]{Greene} is the result of our concern. It gathers and extends results from \cite{Foll}. This theorem is stated in terms of Sobolev norms that we do not use in this work. However, the Sobolev embedding theorem (see for example in \cite[Proposition 2.3]{Greene}) relates these norms to the $C^j$ norms. Again, we observe that there is no hope in solving $\bar{\partial}u=f$ unless $f$ is $\bar{\partial}$-closed. Therefore, we use as domain of the function that assigns solutions, the same space described in \Cref{rem:good_domain}. 

Finally, for the convenience of the reader looking at \cite[Theorem 3.10]{Greene}, let us mention that in the notation used by Greene and Krantz we are taking $(p,q)=(0,1)$, $U$ is the whole manifold and $\xi=1$ on all $\bar{X}$. We are only stating the result directly for the $C^\infty$ topology because the family of $1$ forms to which we are going to apply the result varies smoothly with the parameter of the deformation.  As explained by Greene and Krantz, this result can be deduced after applying the Sobolev embedding theorem to obtain a $C^j$ stability for each $j$ and observing that two functions $f,g \in C^\infty(\bar{X})$ are  $C^\infty$ within $\epsilon$ topology once they are $C^K$ sufficiently close for some $K=K(\epsilon)$ (see \cite[p.50 before remark and pp.27-28]{Greene}). Also, recall that compactly supported smooth functions are contained in every Sobolev space and since the manifolds that we deal with are compact, every smooth function is compactly supported.

We have decided to state the strongest version of \cite[Theorem 3.10]{Greene} that we can use in our situation:

\begin{thm}\label{thm:kohn_stability}
Let $\bar{X}$ be a Hermitian strongly pseudoconvex manifold with complex structure $J_0$ and let $\mathcal{JA}_{0,1}(\bar X,J_0)$ be as in \Cref{rem:good_domain}. Then the map \[ \mathcal{JA}_{0,1}(\bar X,J_0) \to C^\infty_{0}(\bar X)\] that assigns to each complex structure $J$ and each $\bar{\partial}_J$-closed $(0,1)$ form $g$, the Kohn solution $u$ to the problem $\bar{\partial}_Ju=g$, is continuous from the $ C^{\infty,\infty}$ topology to the $C^\infty$ topology.
\end{thm}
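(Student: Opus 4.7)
The plan is to reduce \Cref{thm:kohn_stability} to the Sobolev-norm stability assertion of \cite[Theorem 3.10]{Greene}, then upgrade the conclusion to $C^\infty$ continuity via the Sobolev embedding theorem. First I would recall the analytic content of the Kohn solution: on a Hermitian strongly pseudoconvex manifold $(\bar X, J)$ the unique solution $u$ to $\bar\partial_J u = g$ orthogonal to $\ker(\bar\partial_J)$ is given by $u = \bar\partial_J^{*} N_J g$, where $N_J$ is the Neumann operator associated with the complex Laplacian $\Box_J = \bar\partial_J \bar\partial_J^* + \bar\partial_J^* \bar\partial_J$. Kohn's subelliptic $1/2$-estimates apply uniformly in a neighborhood of $J_0$ in $\mathcal{J}(\bar X, J_0)$, because strong pseudoconvexity is $C^\infty$-open; in particular $N_J$ gains one derivative in Sobolev norms, uniformly in such a neighborhood.

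Next I would invoke \cite[Theorem 3.10]{Greene} in the case $(p,q) = (0,1)$, with $U = \bar X$ and the cutoff function taken identically equal to $1$. That statement, phrased with Sobolev norms, asserts continuity of the assignment $(J,g) \mapsto u$ from a product of Sobolev topologies on $\mathcal{J}(\bar X, J_0) \times C^\infty_1(\bar X)$ to a Sobolev topology on $C^\infty_0(\bar X)$, with a controlled loss of derivatives that is absorbed by taking the source topology of high enough Sobolev regularity. Restricting to the subspace $\mathcal{JA}_{0,1}(\bar X, J_0)$ (where in addition $g$ is a $\bar\partial_J$-closed $(0,1)$-form with respect to $J$) preserves this continuity, since restriction to a subspace of the domain cannot destroy continuity of a map.

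Then I would convert these Sobolev estimates to $C^j$ estimates by applying the Sobolev embedding theorem \cite[Proposition 2.3]{Greene} on the compact manifold $\bar X$: for every integer $j$ there is a Sobolev order $s(j)$ such that $W^{s(j)} \hookrightarrow C^j$ continuously. Because $\bar X$ is compact every smooth form is in every Sobolev space, so no compact-support hypothesis is needed. The composition yields, for every $j$, a pair $(k(j), s(j))$ and a $C^{k(j), s(j)}$-neighborhood of $(J_0, g_0)$ on which the Kohn solutions are within a prescribed $C^j$ bound of $u_0$.

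Finally, to pass from these $C^j$-continuity statements to continuity for the $C^\infty$ topology, I would use the standard observation (recalled by Greene and Krantz in \cite[pp.~27--28 and p.~50]{Greene}) that two smooth functions on a fixed compact manifold are within $\varepsilon$ in the metric defining the $C^\infty$ topology as soon as they are sufficiently close in $C^K$ for some $K = K(\varepsilon)$. Combining this with the $C^j$-stability obtained above gives continuity from the $C^{\infty, \infty}$ topology on $\mathcal{JA}_{0,1}(\bar X, J_0)$ to the $C^\infty$ topology on $C^\infty_0(\bar X)$. The genuine obstacle lies in the first step, namely controlling the Neumann operator $N_J$ as $J$ varies; this is precisely the analytic heart of \cite[Theorem 3.10]{Greene}, and once that theorem is invoked, the remaining work is functional-analytic bookkeeping with Sobolev embeddings and the definition of the $C^\infty$ topology.
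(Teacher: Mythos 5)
Your argument is correct and matches the paper's own derivation: both invoke \cite[Theorem 3.10]{Greene} with $(p,q)=(0,1)$, $U=\bar X$, and cutoff $\xi\equiv 1$, upgrade the Sobolev stability to $C^j$ stability via the Sobolev embedding theorem on the compact manifold $\bar X$, and then pass to the $C^\infty$ topology using the observation (also cited in the paper from Greene--Krantz) that sufficient $C^K$-closeness implies $\varepsilon$-closeness in the $C^\infty$ metric. The only addition on your part is the explicit reminder that $u=\bar\partial_J^{*}N_J g$ with uniformly subelliptic $N_J$ near $J_0$, which is exactly the analytic content Greene--Krantz's theorem encapsulates.
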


In our case, we deal with {\bf families} of complex structures and $(0,1)$-forms. Let $Q$ be a manifold (in this work it will always be a disk) and let $t \in Q$ be a parameter in this space. A continuous map from $Q$ to  $\mathcal{J}(\bar{X},J_0), t \mapsto J_t$ equipped with the $C^\infty$ topology (recall \Cref{rem:good_domain}) is a {\bf smooth family of complex structures} over $Q$. We also say that the set $\{J_t\}_{t \in Q}$ is a smooth family of complex structures over $Q$. Similarly a continuous map from $Q$ to $C^\infty_{1}(\bar{X}), t \mapsto g_t$ equipped with the $C^\infty$ topology defines a {\bf smooth family} of differential $1$-forms $\{g_t\}_{t \in Q}$. In this language we can derive, from \Cref{thm:kohn_stability} the following:

\begin{thm}\label{thm:kohn_families}
	Let $\bar{X}$ be a Hermitian compact manifold with boundary and let $Q$ be a disk. Assume that $\{J_t\}_{t \in Q}$ is a smooth family of complex structures each of them making $\bar{X}$ strongly pseudoconvex and that $\{g_t\}_{t \in Q}$ is a smooth family of differential $1$-forms such that $g_t$ is a $(0,1)$-form with respect to $J_t$ and  $\bar\partial_{J_t}g_t=0$. Then the family $\{u_t\}_{t \in Q} \subset C^\infty_0(\bar{X})$ where each $u_t$ is the Kohn solution (\Cref{thm:kohn_result}) to the problem $\bar{\partial}_{J_t}u=g_t$ is a smooth family of complex valued functions over $Q$.
\end{thm}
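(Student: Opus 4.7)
The plan is to reduce \Cref{thm:kohn_families} to \Cref{thm:kohn_stability} by verifying that the parametrised data $t\mapsto(J_t,g_t)$ can be upgraded to a continuous curve in $\mathcal{JA}_{0,1}(\bar X,J_0)$ for suitable reference structures $J_0$, and then composing with the (continuous) Kohn solution map. The only nontrivial bookkeeping is that the stability theorem is stated relative to a fixed reference complex structure $J_0$ and a fixed $C^\infty$ neighbourhood $\mathcal J(\bar X,J_0)$; since smoothness of $t\mapsto u_t$ is a local property in $t$, it is enough to establish continuity in a neighbourhood of each $t_0\in Q$.

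First, fix $t_0\in Q$ and let $J_0:=J_{t_0}$. By hypothesis $J_0$ makes $\bar X$ strongly pseudoconvex, so, as noted in \Cref{rem:good_domain}, there is a $C^\infty$ neighbourhood $\mathcal J(\bar X,J_0)\subset\mathcal J(\bar X)$ of $J_0$ consisting entirely of complex structures for which $\bar X$ is strongly pseudoconvex. Because $\{J_t\}_{t\in Q}$ is a smooth family, the map $t\mapsto J_t$ is continuous from $Q$ into $\mathcal J(\bar X)$ with the $C^\infty$ topology; thus there is a neighbourhood $Q_{t_0}\subset Q$ of $t_0$ with $J_t\in\mathcal J(\bar X,J_0)$ for every $t\in Q_{t_0}$.

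Next, by hypothesis $g_t$ is a $(0,1)$-form with respect to $J_t$ and $\bar\partial_{J_t}g_t=0$, so $(J_t,g_t)\in\mathcal{JA}_{0,1}(\bar X,J_0)$ for $t\in Q_{t_0}$. The smooth families assumption says that $t\mapsto J_t$ and $t\mapsto g_t$ are continuous into $\mathcal J(\bar X)$ and $C^\infty_1(\bar X)$ respectively, both equipped with their $C^\infty$ topologies. Consequently the product map
\[
Q_{t_0}\longrightarrow\mathcal J(\bar X,J_0)\times C^\infty_1(\bar X),\qquad t\longmapsto(J_t,g_t),
\]
is continuous with respect to the product $C^{\infty,\infty}$ topology. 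Restricting the target to the subspace $\mathcal{JA}_{0,1}(\bar X,J_0)$ (which carries the induced $C^{\infty,\infty}$ topology by definition) preserves continuity.

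Finally, apply \Cref{thm:kohn_stability}: the Kohn solution map $\mathcal{JA}_{0,1}(\bar X,J_0)\to C^\infty_0(\bar X)$, $(J,g)\mapsto u$, is continuous from the $C^{\infty,\infty}$ topology to the $C^\infty$ topology. Composing with the curve above produces a continuous map $Q_{t_0}\to C^\infty_0(\bar X)$, $t\mapsto u_t$. Since $t_0\in Q$ was arbitrary, continuity holds on all of $Q$, and by definition this means $\{u_t\}_{t\in Q}$ is a smooth family of complex-valued functions over $Q$. The main (and essentially only) subtlety is verifying that the hypotheses of \Cref{thm:kohn_stability} apply globally; this is handled by the local covering argument just described, together with the openness of strong pseudoconvexity in the $C^\infty$ topology.
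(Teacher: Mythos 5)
Your proof is correct and matches the approach the paper intends; the paper itself does not spell out the derivation (it simply says the result "can be derived" from \Cref{thm:kohn_stability}), and what you have written is exactly the bookkeeping that makes this derivation rigorous. In particular, the local-in-$t$ reduction to a fixed reference structure $J_0=J_{t_0}$, the use of \Cref{rem:good_domain} to produce the neighbourhood $\mathcal J(\bar X,J_0)$, and the observation that the Kohn solution is canonical (so the locally defined $u_t$ patch consistently, by uniqueness as in \Cref{rem:unique_kohn}) are precisely the implicit steps needed.
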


For the proof of our crucial \Cref{prop:extension_to_family}, we could use either \Cref{thm:continuous_variation} or \Cref{thm:kohn_stability}. For reasons explained above about the lack of complete proofs of \Cref{thm:continuous_variation}, from now on, we only make references to the \Cref{thm:kohn_families} above,  as it depends only on \Cref{thm:kohn_stability}.

  We include a few other references that either use the above result, extend it or prove other very related results: \cite{Cho,Cho2,Cho3,Gong}.

\subsection*{Singularity theory and deformations of strongly pseudoconvex surfaces}

We introduce now a notion of {\em deformation} of a complex manifold and state the result on deformations of strongly pseudoconvex 
surfaces proven by Bogomolov and De Oliveira exploiting a previous result by Laufer.

\begin{definition}\label{def:deformation}
 Let $X$ be a strongly pseudoconvex manifold (maybe with smooth boundary), let $Q \subset \C$ be an open disk centred at $0\in \C$. We say that a map $\omega:\calX \to Q$ is a {\bf deformation} of $X$ if 
 \begin{enumerate}
 	\item $\calX$ is a complex manifold.
 	\item The map $\omega$ is a locally trivial $C^\infty$ fibration.
 	\item The central fiber $\omega^{-1}(0)$ is biholomorphic to $X$.
 \end{enumerate}
\end{definition}

\begin{remark}
	Another notion of deformation is as follows. Let $(X,x),(Y,y)$ and $(S,s)$ be germs of complex analytic spaces. A deformation of $(X,x)$ is a germ of a flat morphism $D:(Y,y) \to (S,s)$ together with an isomorphism between $D^{-1}(s)$ and $(X,x)$. 
\end{remark}

Suppose that we start with a strongly pseudoconvex manifold with smooth boundary $\bar X$. Then, it is known that if one wishes to keep track of the boundary of $X$, the theory of deformations of $X$ becomes infinite-dimensional \cite[Corollary 4.2]{Burn}. What Laufer did in his series of papers \cite{Lau2,Lau} to avoid this fact is to fix a compact analytic set $A \subset X$ and study deformations of $X$ near $A$. This allowed him to apply Kodaira-Spencer techniques to prove his results. It turns out that this setting is enough for many purposes, in particular for proving the existence of a versal deformation space \cite{Lau2}. 

The next result was obtained by Bogomolov and De Oliveira \cite[Theorem (2)]{Bog}.

\begin{thm}[Bogomolov-De Oliveira]\label{thm:small_stein_deformation}
	Let $\bar X$ be a strongly pseudoconvex surface with boundary that has a $1$-dimensional exceptional set without curves of the first kind: that is, smooth rational compact analytic curves with self intersection number equal to $-1$. Then there exists a deformation $\omega:\bar \calX \to Q$ over a small complex disk $Q\subset\C$ such that the fibers above $t \neq 0$ do not contain compact analytic curves without boundary.  
\end{thm}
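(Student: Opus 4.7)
The plan is to produce the family $\omega: \bar{\mathcal{X}} \to Q$ by restricting a versal deformation of the pair $(\bar X, E)$ along a carefully chosen holomorphic arc. The starting point is Laufer's theorem: since the exceptional set $E \subset \bar X$ is compact and $\bar X$ is strongly pseudoconvex, there is a semi-universal deformation $\pi: \bar{\mathcal{Y}} \to B$ of the germ of $\bar X$ along $E$, with $B$ a finite-dimensional complex analytic germ whose tangent space at $0$ carries the infinitesimal deformation classes in $H^1(\bar X, \Theta_{\bar X})$. I would then produce $Q$ as (the image of) a holomorphic disk in $B$, chosen so that fibers above $Q \setminus \{0\}$ contain no compact analytic curves.

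The core of the argument is the selection of the right direction in $B$. For each irreducible component $E_i$ of $E$, the obstruction to the extension of $E_i$ as a compact analytic curve under a first-order deformation with Kodaira--Spencer class $\kappa \in H^1(\bar X, \Theta_{\bar X})$ is measured by the image of $\kappa$ under the map $H^1(\bar X, \Theta_{\bar X}) \to H^1(E_i, \mathcal{N}_{E_i/\bar X})$ induced by restricting the tangent sheaf to the normal bundle. The no-$(-1)$-curve hypothesis guarantees that each of these target groups is nontrivial: for rational components it excludes exactly the case $\mathcal{N}_{E_i/\bar X} \cong \mathcal{O}_{\mathbb{P}^1}(-1)$ where $H^1$ would vanish, and for higher-genus components, $E_i^2 < 0$ combined with Serre duality forces $H^1(E_i, \mathcal{N}_{E_i/\bar X})$ to be nonzero. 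A surjectivity argument on the restriction maps (using the long exact cohomology sequences for the conormal bundles) would then let me pick $\kappa$ whose image is nonzero in every summand, so that no component of $E$ survives as a compact curve in the first-order deformation along $\kappa$.

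Given such a $\kappa$, I would take $Q$ to be a small disk in $B$ tangent to $\kappa$ and argue by upper semicontinuity that, after shrinking $Q$, the fibers $\bar{\mathcal{X}}_t$ for $t \neq 0$ contain no compact analytic curves at all. Any hypothetical such curve $C_t$ would, via a Bishop-type limit on the relative Douady space associated with $\pi$, specialize as $t \to 0$ to a compact analytic cycle in $\bar X$ supported on $E$, and the chosen direction has been arranged precisely to forbid this persistence. The $C^\infty$-triviality required in \Cref{def:deformation} is then standard: $\omega$ is a proper submersion on the smooth manifold-with-boundary $\bar{\mathcal{X}}$, and Ehresmann's fibration theorem (applied with care along $\partial \bar{\mathcal{X}}$) yields the local triviality.

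The hardest step is the second paragraph: controlling the obstruction map globally rather than component-by-component. A priori, a cycle of components of $E$ could conspire to remain compact under the deformation even if each individual $E_i$ wants to move, so one needs either a strong surjectivity statement for the diagonal map $H^1(\bar X, \Theta_{\bar X}) \to \bigoplus_i H^1(E_i, \mathcal{N}_{E_i/\bar X})$ or, more globally, the full Bogomolov--De Oliveira machinery of Stein approximations on the total space $\bar{\mathcal{X}}$, which is likely where the genuinely delicate work lies and which ultimately converts the infinitesimal vanishing of compact curves into an honest analytic statement over a small disk $Q$.
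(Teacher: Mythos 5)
This theorem is a quoted result: the paper attributes it to Bogomolov and De Oliveira and cites their paper (\cite[Theorem~(2)]{Bog}) without reproducing any proof, remarking only that Laufer earlier proved the weaker statement that a single compact curve $A$ with $A\cdot A < -1$ can be killed by a small deformation. So there is no ``paper's own proof'' to compare your attempt against.

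Judged on its own terms, your sketch is a reasonable outline that is broadly consonant with the Laufer--Bogomolov--De~Oliveira circle of ideas: start from Laufer's finite-dimensional semi-universal deformation of the germ of $\bar X$ along the exceptional set $E$; observe that, since the intersection matrix of $E$ is negative definite (this is where strong pseudoconvexity enters, a point you implicitly use but should state), each component $E_i$ satisfies $E_i^2<0$, and the no-$(-1)$-curve hypothesis then ensures $H^1(E_i,\mathcal{N}_{E_i/\bar X})\neq 0$; try to pick a Kodaira--Spencer direction whose restriction is nonzero on every component; and finally pass from the infinitesimal statement to an honest family over a disk via a Douady-space compactness argument. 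That said, the two places you yourself flag as gaps are exactly where the actual content of the theorem lives, and your sketch underestimates the second one. Destroying each $E_i$ individually to first order is necessary but not sufficient: a family of irreducible compact curves $C_t$ can limit to a \emph{cycle} $\sum a_i E_i$ with multiplicities, whose first-order obstruction lives in $H^1$ of the normal sheaf of the non-reduced scheme $Z=\sum a_i E_i$, which is not a direct sum of the $H^1(E_i,\mathcal{N}_{E_i/\bar X})$. One must therefore control simultaneously the infinitely many possible limit cycles (this is also why the Bishop/Douady step needs an a priori bound on the volume or degree of hypothetical $C_t$, a point you do not address), and then show the bad directions form a proper analytic or measure-zero subset of the Kuranishi base. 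Without carrying out that uniform argument, the sketch does not yet constitute a proof.
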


 Laufer proved in \cite[Theorem 3.6]{Lau} a weaker result, allowing to get rid of any particular compact analytic closed curve without boundary $A$ with $A\cdot A < -1$ by slightly deforming the complex structure. 

The theorem above is another key ingredient for the proof of \Cref{thm:positive_factorization}.

\section{Preliminaries on mapping class groups}\label{sec:mapping_class_groups}

We turn now our attention to the theory of mapping class groups. The purpose of this section it to collect the results on mapping class group that we use throughout the rest of the text as well as  to fix notations and conventions.

\begin{notation}\label{not:preliminaries}
	The symbol $\Si_{g,r}$ denotes a compact connected surface of genus $g \geq 0$ and $r \geq 0$ boundary components. When it is clear from the context or unnecessary in the discussion, the subindex is dropped and we just write $\Si$. 
	
	We write $\modgr$ for the {\bf mapping class group} of the surface $\Si_{g,r}$, that is, the group of orientation-preserving diffeomorphisms of $\Si_{g,r}$ that restrict as the identity to $\partial \Si$ up to isotopy fixing the boundary pointwise. We write $\pmodgr$ for what is usually known as the {\bf pure mapping class group}, that is, the group of orientation-preserving diffeomorphisms of the surface $\Si_{g,r}$ that leave each boundary component invariant, up to isotopy free on the boundary. We also use the letter $\phi$ to denote a representative homeomorphism of the corresponding mapping class. It will always be clear from the context which is the case.
   
   Given a simple closed curve $\gamma \subset \Si$ we denote by $$t_\gamma: \Si \to \Si$$ a {\bf right-handed Dehn twist} around $\gamma$ or its mapping class in $\modgr$. The support of $t_\gamma$ is concentrated in a tubular neighborhood of $\gamma$. Also, the mapping class of $t_\gamma$ only depends on the isotopy class of the simple closed curve $\gamma$. 
   
   \begin{figure}[h]
   \includegraphics[scale=0.6]{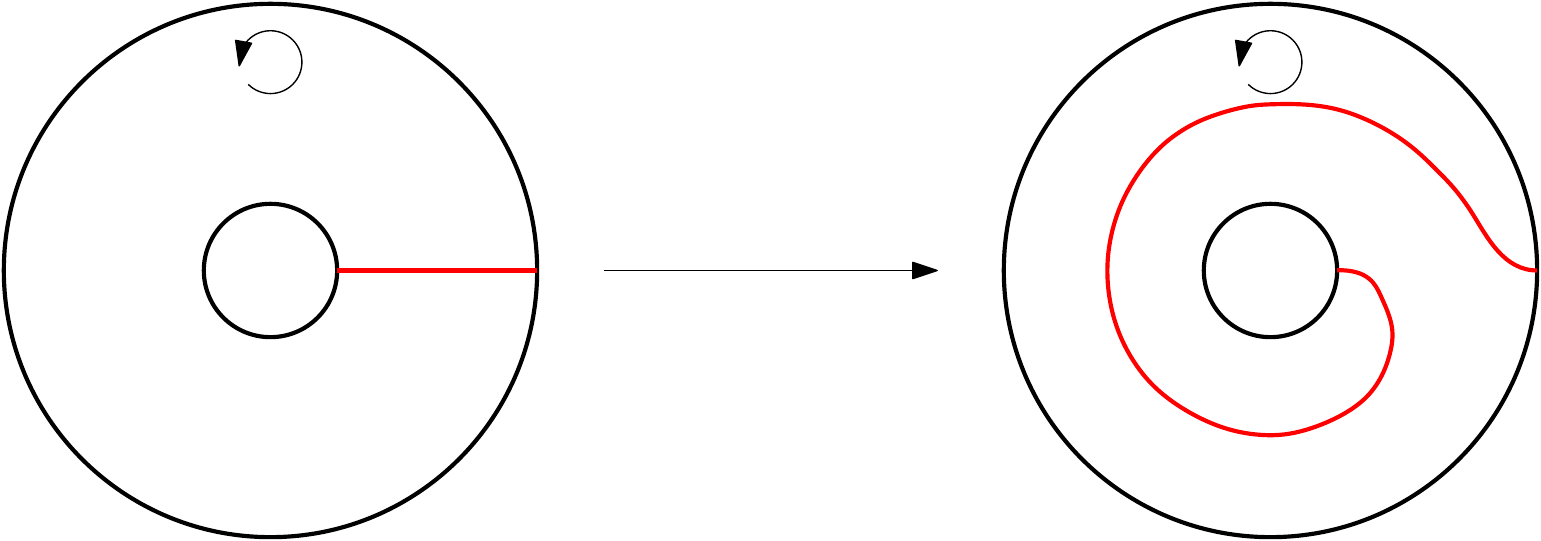}
   \caption{The effect of a right handed Dehn twist on a properly embedded arc in the annulus. The orientation of the annulus is indicated by the curved arrows.}
   \label{fig:r_dehn}
   \end{figure}

   Given a boundary component $B \subset \partial \Si$, we denote by $t_{B}$ the {\bf right-handed boundary Dehn twist} around $B$, which is by definition a right-handed Dehn twist around a simple closed curve parallel to $B$. The set of mapping classes that admit a factorization consisting only of right-handed Dehn twists is clearly a monoid and we denote it by $\mathbf{\dehn}$. We also call these factorizations {\bf positive}. 
\end{notation}
 
 The following is a landmark result in mapping class group theory which we state to motivate the next definition.
 
 \begin{thm}[See \cite{Thu} and Corollary 13.3 from \cite{Farb} ]\label{thm:nt_classification}
	Let $\phi: \Si \to \Si$ be an orientation preserving homeomorphism that 
	restricts to the identity on $\partial \Si$. Then there exists $\phi'$ isotopic to $\phi$ and a collection $\calC$ of non-nullhomotopic disjoint simple closed curves including all boundary components such that:
	\begin{enumerate}
		\item The collection of curves is invariant by $\phi'$, i.e. $\phi'(\calC)= \calC$.
		\item The homeomorphism $\phi'$ restricted to each connected component of 
		$\Si \setminus \calC$ (and its iterations by $\phi'$) is isotopic
		either to a periodic or to a pseudo-Anosov homeomorphism.
	\end{enumerate}
\end{thm}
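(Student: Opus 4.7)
The plan is to invoke Thurston's trichotomy for individual mapping classes and then iterate it on the complementary pieces of a maximal invariant multicurve. First I would consider the action of $\phi$ on the Thurston compactification $\overline{\mathcal{T}(\Si)} = \mathcal{T}(\Si) \cup \mathbb{P}\mathcal{ML}(\Si)$ of Teichm\"uller space (after capping off the boundary components with once-punctured disks, say, so that the standard compactification applies). This compactification is homeomorphic to a closed ball of dimension $6g-6+2r$, so the Brouwer fixed-point theorem provides a fixed point of the induced action.

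The next step is a case analysis on the nature of this fixed point, giving the trichotomy. If the fixed point lies in the interior $\mathcal{T}(\Si)$, then $\phi$ preserves a conformal (equivalently hyperbolic) structure, and an application of Hurwitz's automorphism bound or the Nielsen realization argument yields a finite-order representative (the \emph{periodic} case). If the fixed point is a projective measured lamination $[\mu]$ whose support is filling, one shows that there is a transverse filling measured lamination, the stretch factors are reciprocal, and $\phi$ is isotopic to a pseudo-Anosov homeomorphism by assembling them into a singular flat structure. Otherwise, $[\mu]$ is carried by a lamination that is not filling, and one can extract from it an essential multicurve $\calC_0$ which $\phi$ permutes up to isotopy.

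Granting the trichotomy, I would produce the collection $\calC$ by a maximality argument. Starting from $\partial \Si$, which is invariant since $\phi$ fixes it pointwise, I would iteratively enlarge it by any $\phi$-invariant (up to isotopy) collection of disjoint essential simple closed curves supplied by the reducible case of the trichotomy applied to restrictions. The process terminates because any system of pairwise non-isotopic disjoint essential simple closed curves on $\Si_{g,r}$ has at most $3g-3+r$ components. Once $\calC$ is maximal, a standard isotopy argument replaces $\phi$ by an isotopic $\phi'$ that preserves $\calC$ setwise, which gives condition (i). For condition (ii), pick a component $S$ of $\Si \setminus \calC$ and let $k$ be the minimal period of the $\phi'$-orbit of $S$; set $\psi := \phi'^k|_S \in \mathrm{Mod}(S)$. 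Applying the trichotomy to $\psi$, if it were reducible then pulling a reducing curve back through the orbit $S, \phi'(S), \dots, \phi'^{k-1}(S)$ would enlarge $\calC$, contradicting maximality. Hence $\psi$ is periodic or pseudo-Anosov, as required.

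The main obstacle is the trichotomy itself: showing that a filling fixed projective measured lamination promotes to a \emph{pair} of transverse invariant measured foliations with reciprocal stretch factors, which is the heart of Thurston's classification. The delicate inputs are the density of multicurves in $\mathcal{ML}(\Si)$, continuity of intersection number, and the realization of abstract measured foliations by quadratic differentials (or equivalently, the train-track combinatorics). A complementary technical point is the isotopy-to-setwise-preservation step for $\calC$; this is handled by the bigon criterion for simple closed curves together with the Alexander trick on annular neighborhoods, and is routine once the invariance up to isotopy has been established.
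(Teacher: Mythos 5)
The paper does not prove this statement at all: it is quoted as a landmark theorem with the citation to Thurston and to Corollary 13.3 of Farb--Margalit, and no proof environment follows. So there is no ``paper's proof'' to compare against. Your sketch faithfully reproduces the standard Thurston argument that underlies the cited reference: compactify Teichm\"uller space, invoke Brouwer, trichotomize on the nature of the fixed point, then build the reducing system $\calC$ by a maximality argument bounded by $3g-3+r$ non-peripheral components, and apply the trichotomy again on the pieces. This is the same approach one would find by unwinding the citation, so the routes agree. Two remarks worth keeping in mind if you ever flesh this out: first, you should state explicitly that Thurston's extension of the mapping class group action to $\overline{\mathcal{T}(\Si)}$ is continuous (this is a real theorem, not a formality, and Brouwer does not apply without it); second, the capping-off step requires a word on how one pulls the decomposition back to the bounded surface and why the peripheral curves, which are inessential after capping, are nonetheless added to $\calC$ by fiat at the start, exactly as you do. Neither is a gap at the level of a sketch, and the proposal is sound.
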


The decomposition given by \Cref{thm:nt_classification} is called {\bf Nielsen-Thurston decomposition} and is unique up to isotopy if the system of curves $\calC$ is minimal by inclusion. We always assume that our pseudoperiodic representatives satisfy $(i)$ and $(ii)$ from the previous theorem. This decomposition leads to the following definition.

\begin{definition}\label{def:nt_decomposition}
	Let $\phi \in \modgr$, we say that $\phi$ is {\bf pseudoperiodic} if only periodic pieces appear in its Nielsen-Thurston decomposition.
\end{definition}

Pseudoperiodic homeomorphisms are of special importance in complex singularity theory, as all geometric monodromies of reduced holomorphic germs of functions on isolated complex surface singularities are of this kind. This is closely related to the fact that the links of isolated complex surface singularities are graph manifolds. To learn more on this topic we refer to \cite{Eis}. 

Actually, only a restricted type of pseudoperiodic homeomorphisms shows up in the situation mentioned in the previous paragraph. In order to describe precisely this class of homeomorphisms in \Cref{thm:anne_realization} below, we need to recall first the definitions of two numerical invariants associated to pseudoperiodic elements of a mapping class group.

\begin{definition}\label{def:fractional_dehn_twist}.
Let  $\phi \in \modgr$ be pseudoperiodic. Let $N$ be a tubular neighborhood of the union of all the boundary components and the curves in $\calC$. Let $B_i \subset \partial \Si_{g,r}$ be a connected component of the boundary and let $N_{B_i}$ be the connected component of $N$ which is a tubular neighborhood of $B_i$. By definition of pseudoperiodic homeomorphism, there exists $n \in \NN$  such that  $\phi^n$ is isotopic to a homeomorphism that restricts to the identity outside $N$ and  it is of the form $t_{B_i}^k$ when restricted to $N_{B_i}$ for some $k \in \ZZ$.  The {\bf fractional Dehn twist coefficient} of $\phi$ at $B_i$ is denoted by $\mathrm{fr}(\phi, B_i)$ and is defined to be the rational number  $$\mathrm{fr}(\phi, B_i):=\frac{k}{n}.$$ Note that $k$ might be negative meaning that $\phi^n$ is a power of a left-handed boundary Dehn twist near this boundary component.
\end{definition}

\begin{remark}
We have just defined the notion of fractional Dehn twist coefficient for pseudoperiodic mapping classes. However, there exists a notion of fractional Dehn twist coefficient for mapping classes that also have pseudo-Anosov pieces containing boundary components of the surface. We do not use this notion in the present work and we refer the interested reader to \cite[Section 3.2]{KoI}.
\end{remark}

 Let  $\phi \in \modgr$ be pseudoperiodic and let $C_1, \ldots, C_a$ be an orbit of curves in $\calC$ (as in \Cref{thm:nt_classification}) such that $\phi(C_i)=C_{i+1}$ for all $i=1, \ldots, a-1$ and $\phi(C_a)=C_1$. We say that the orbit is {\bf amphidrome} if $\phi^a_{|_{C_1}}:C_1 \to C_1$ inverts the orientation. Otherwise we say that it is a {\bf regular} orbit.
 
\begin{definition}\label{def:screw_number}
 Let  $\phi \in \modgr$ be pseudoperiodic and let $C_1, \ldots, C_a$ be an orbit of curves in $\calC$ (as in \Cref{thm:nt_classification}) such that $\phi(C_i)=C_{i+1}$ for all $i=1, \ldots, a-1$ and $\phi(C_a)=C_1$. Let $N$ be a tubular neighborhood of all the curves in $\calC$ and all the boundary curves. By definition of pseudoperiodic homeomorphism, there exists $n \in \NN$  such that  $\phi^n$ is isotopic to a homeomorphism that restricts to the identity outside $N$ and, in a tubular neighbourhood $N_{C_i} \subset N$ of any of the curves  $C_i$ in the orbit, is of the form $t^k_{C_i}$ for some $k\in \ZZ$. Let $\alpha=a$ if the orbit is regular and let $\alpha=2a$ if the orbit is amphidrome: that is, $\alpha$ is the smallest natural number such that $\phi^\alpha$ sends each curve in the orbit to itself preserving the orientation on the curve. We define the {\bf screw number} of $\phi$ at the orbit, as $$\mathrm{sc}(\phi,C_i):=\frac{k \alpha }{n}.$$
\end{definition}

	The screw number measures the amount of twisting of $\phi^\alpha$ around $C_i$ and it is the same for all curves in the same orbit. Therefore, it is well-defined either for the orbit itself or for any particular curve in $\calC$. 
	
	\begin{remark}
	 Observe that if we consider a boundary parallel curve as an orbit (consisting of just one curve) invariant by $\phi$ (recall \Cref{thm:nt_classification}) having a periodic piece of period $1$ on one of it sides, then the notions of screw number and fractional Dehn twist coefficient coincide. 
	 
	 	Our sign conventions may differ from those of other authors. For instance, they agree with those of Montesinos and Matsumoto \cite{Mat} for Dehn twist coefficients but they differ for screw numbers. Our conventions are chosen in order to make the invariant attached to a right handed Dehn twist positive, no matter if the curve is boundary parallel or any simple closed curve in $\calC$.
	\end{remark}

\begin{remark}\label{rem:formula_screw}
	We recall the following property of screw numbers. Let $\{C_1, \ldots, C_a\}$ be an orbit of invariant simple closed curves by $\phi$. Let $\beta=1$ if the orbit is regular and $\beta=2$ if the orbit is amphidrome and let $\alpha$ be as in \Cref{def:screw_number}. Take $C$ to be any curve in that orbit. Then $$\mathrm{sc}(t_{C}^m\circ\phi,C)=\frac{(k+ m \cdot n /a)\cdot \alpha}{n} = \mathrm{sc}(\phi,C)+ \beta \cdot m.$$ 
	That is, composing a pseudoperiodic homeomorphism with a power of a Dehn twist around any of the curves of a given orbit, varies the screw number associated with that orbit by the same (resp. twice ) amount  as the exponent of the power if the orbit is regular (resp. amphidrome). This follows from a direct application of the definition of screw number. 
	
	A similar phenomenon happens in the case of fractional Dehn twist coefficients:
	
	$$\mathrm{fr}(t_{B_i}^m\circ\phi,B_i)=\frac{(k+m \cdot n)}{n} = \mathrm{fr}(\phi,B_i)+m.$$ 
\end{remark}

The notion of fractional Dehn twist coefficient has been present in the literature for a long time and it is difficult to attribute it to a particular author. For example, Gabai \cite{Gab} already used these quantities to measure the difference between an homeomorphism admitting a pseudo-Anosov representative and this representative.

More recently, Honda, Kazez and Mati\'c \cite{KoI} used fractional Dehn twist coefficients as a topological tool to detect tight contact structures from the monodromy of a supporting open book. More concretely, they {\em recovered} the notion of a homeomorphism being {\bf right-veering} (see \cite[Definition 2.1]{KoI}) and defined the monoid  $\mathbf{\veer}$ consisting of all homeomorphisms of the surface $\Si_{g,r}$ that are right-veering. As they note in \cite{KoI}, this concept of ``veering to the right'' can be traced back to, at least, Thurston's proof of the left orderability of the braid group. See also \cite{Bogo2} as an example of a more recent use of this concept previous to \cite{KoI}.

 The following theorem contains just a couple of results of \cite{KoI} relevant for this work.

\begin{thm}\label{thm:veer_honda}
	Let $\phi \in \modgr$. Then
	\begin{enumerate}
		\item  $\dehn \subset \veer$ (\cite[Lemma 2.5]{KoI}).
		\item A contact structure on a $3$-manifold is tight if and only if all the supporting open books have right-veering geometric monodromy (\cite[Theorem 1.1]{KoI}).
	\end{enumerate}
\end{thm}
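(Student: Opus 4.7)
Both statements in the theorem are attributed to Honda--Kazez--Mati\'c, so the cleanest route is simply to invoke \cite{KoI}; nevertheless here is how I would approach a self-contained proof sketch.

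For part (i), the plan is to work with the arc-based definition of right-veering: given a boundary component $B$ and a properly embedded essential arc $\alpha$ with an endpoint on $B$, the homeomorphism $\phi$ is right-veering at $B$ if, after isotoping $\phi(\alpha)$ and $\alpha$ to minimize intersections and fixing the endpoint on $B$, the image $\phi(\alpha)$ lies to the right of $\alpha$ at that endpoint (or coincides with $\alpha$). First I would verify that the identity is right-veering trivially. Then I would check that a single right-handed Dehn twist $t_\gamma$ is right-veering: for arcs disjoint from $\gamma$ this is immediate, and for arcs crossing $\gamma$ one computes the local model of \Cref{fig:r_dehn} to see that each strand gets pushed to the right along $\gamma$ and hence, after traversing the arc back to $B$, still emerges to the right. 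The final step is to show that right-veering is closed under composition, which amounts to the observation that if $\psi(\alpha)$ lies weakly to the right of $\alpha$ at $B$ and $\phi(\beta)$ lies weakly to the right of $\beta$ at $B$ for every $\beta$, then $\phi(\psi(\alpha))$ lies weakly to the right of $\psi(\alpha)$ and hence of $\alpha$. Since $\dehn$ is by definition generated as a monoid by right-handed Dehn twists (including boundary-parallel ones), the inclusion $\dehn\subset\veer$ follows.

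For part (ii), I would prove the two implications separately. For the forward direction, I would argue by contrapositive: if some supporting open book $(\Sigma,\phi)$ is not right-veering, there is a boundary component $B$ and a properly embedded arc $\alpha$ based on $B$ whose image $\phi(\alpha)$ lies strictly to the left of $\alpha$ at the endpoint. The standard construction then glues pieces of $\alpha$ and $\phi(\alpha)$ inside a page together with an arc running along the binding to produce an embedded disk in the ambient $3$-manifold whose characteristic foliation has a single interior elliptic tangency; a Giroux-type convex-surface argument shows this disk is overtwisted, contradicting tightness. For the backward direction, one uses the fact that any two open book decompositions supporting the same contact structure are related by a sequence of positive Giroux stabilizations, together with the observation that positive stabilization preserves the right-veering property; if every supporting open book is right-veering, then in particular there exists one whose monodromy admits a factorization certifying tightness via the Eliashberg--Giroux correspondence.

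The main obstacle is the backward direction of (ii): turning the combinatorial hypothesis on all open books into an actual tight contact structure requires either a direct overtwisted-disk obstruction argument in the style of \cite{KoI} or passage through Giroux correspondence and stabilization lemmas, and neither is elementary. Since the paper's actual use of \Cref{thm:veer_honda} is only as context for placing $\dehn$ inside $\veer$ and motivating the fractional Dehn twist coefficient, I would in practice state the theorem as a black box with citations to \cite[Lemma~2.5]{KoI} and \cite[Theorem~1.1]{KoI}, as the author does, and only expand part (i) along the lines above if a self-contained presentation were required.
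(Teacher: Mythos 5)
The paper offers no proof of this theorem at all: both clauses are stated as citations to \cite{KoI}, and you correctly identify that treating them as a black box is both what the author does and the appropriate course. Your sketch of part (i) is essentially the argument in \cite{KoI} (the relation ``to the right'' is a total order on isotopy classes of arcs based at a point, $\veer$ is therefore a monoid, and a single right-handed Dehn twist is right-veering). One caution about your sketch of the backward direction of (ii): ``there exists one whose monodromy admits a factorization certifying tightness'' is not how the argument can go, since right-veering monodromy does \emph{not} certify tightness --- Honda--Kazez--Mati\'c themselves give right-veering open books supporting overtwisted structures. The actual proof of that implication is by contrapositive: every overtwisted contact structure is supported by a negatively stabilized open book, and negative stabilization visibly destroys right-veering, so if $\xi$ is overtwisted then \emph{some} supporting open book fails to be right-veering. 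Since you ultimately defer to the citations as the paper does, this inaccuracy does not affect your conclusion.
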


Also, we have the following theorem.

\begin{thm}[Loi-Piergallini \cite{Loi}, Giroux \cite{Gir}]\label{thm:dehn_fillable}
	A contact structure is holomorphically fillable if and only if there exists an open book supporting it with monodromy in $\dehn$.
\end{thm}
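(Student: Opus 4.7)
The plan is to prove the two directions of the biconditional separately, via the standard dictionary between supported open books and Stein fillings through positive allowable Lefschetz fibrations on 4-manifolds.

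For the ``monodromy in $\dehn$ implies holomorphically fillable'' direction, suppose $(\Sigma, \phi)$ is an open book supporting a contact structure $\xi$ on a 3-manifold $M$, with $\phi = t_{\gamma_n} \circ \cdots \circ t_{\gamma_1}$ a product of right-handed Dehn twists. I would first build a 4-manifold $W$ as the total space of a Lefschetz fibration $\pi:W \to D^2$ with regular fiber $\Sigma$: starting from the trivial fibration $\Sigma \times D^2$, I attach one 2-handle along each $\gamma_i \subset \Sigma \times \{p_i\}$ (for distinct points $p_i \in \partial D^2$) with framing $-1$ relative to the surface framing, producing a critical point of $\pi$ with vanishing cycle $\gamma_i$. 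The monodromy of $\pi$ around $\partial D^2$ is then $\phi$, so $\partial W$ is diffeomorphic to $M$. To install a Stein structure on $W$, I use that $\Sigma$ has nonempty boundary, so it is Stein, hence so is the polydisk $\Sigma \times D^2$. By Eliashberg's Stein handle-attachment theorem, one extends the Stein structure over each 2-handle provided the attaching circle is Legendrian in the convex boundary with framing one less than its Thurston--Bennequin number; the vanishing cycles can be arranged to be Legendrian in the appropriate page. Finally, Giroux's compatibility criterion identifies the contact structure on $\partial W$ induced by this Stein structure with the $\xi$ supported by $(\Sigma,\phi)$.

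For the converse, suppose $(W, J)$ is a holomorphic filling of $(M, \xi)$, i.e.\ a Stein filling. Choose a strictly plurisubharmonic Morse exhaustion function $\rho:W \to \RR$ as in \Cref{def:spc_stein}. Eliashberg's theorem on handle decompositions of Stein domains then yields a handle decomposition of $W$ using only handles of index $\leq 2$. I would next invoke the Loi--Piergallini (or Akbulut--Ozbagci) construction: after finitely many positive stabilisations of an auxiliary open book, this handle decomposition can be promoted to a positive allowable Lefschetz fibration $\pi:W \to D^2$. Restricting $\pi$ to $\partial W$ produces an open book on $M$ whose monodromy is the composition of right-handed Dehn twists around the vanishing cycles of $\pi$, hence an element of $\dehn$. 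Compatibility of this open book with $\xi$ follows because the K\"ahler form coming from $\rho$ restricts to a supporting contact form on $\partial W$ adapted to the induced open book.

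The main obstacle is in the converse direction, and specifically in the step upgrading a Stein handle decomposition to a \emph{positive} allowable Lefschetz fibration. The delicate point is to arrange the critical points so that each 2-handle really contributes a \emph{right}-handed (rather than arbitrary) Dehn twist, and to do so while preserving the compatibility of the boundary open book with $\xi$. This requires a non-trivial stabilisation argument and is the technical core of the Loi--Piergallini paper. The forward direction, by contrast, is a clean assembly of Eliashberg's handle theorem together with Giroux's compatibility criterion.
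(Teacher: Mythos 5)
The paper does not prove this result --- it is quoted from Loi--Piergallini and Giroux and used as a black box, so there is no in-paper proof to compare against. Your outline is a correct summary of the standard argument: handle-by-handle Eliashberg attachment for the forward direction, the Loi--Piergallini promotion of a Stein handle decomposition to a positive allowable Lefschetz fibration for the converse, and Giroux compatibility closing both directions.

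One point you elide is worth flagging. You open the converse with ``suppose $(W,J)$ is a holomorphic filling of $(M,\xi)$, i.e.\ a Stein filling,'' but this ``i.e.''\ is a theorem, not a definition. A compact strongly pseudoconvex complex surface can contain compact holomorphic curves and hence fail to be Stein; to replace a holomorphic filling by a Stein one inducing the same contact boundary, one must blow down any exceptional curves of the first kind and then deform the complex structure to eliminate the remaining compact analytic curves. This is precisely the Bogomolov--De Oliveira theorem that the present paper invokes for other purposes (\Cref{thm:small_stein_deformation}). Since the statement here uses the phrase ``holomorphically fillable'' while the cited Loi--Piergallini and Giroux results are formulated for Stein fillings, a complete proof needs this reduction made explicit. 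Beyond that, the obstacle you identify in the converse --- forcing positivity of the monodromy while preserving compatibility with $\xi$ through the stabilisation --- is indeed the technical core of Loi--Piergallini's argument, and your account of both directions otherwise tracks the cited sources faithfully.
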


The above two theorems emphasize the importance for contact topology of being able to the determine the gap between $\veer$ and $\dehn$.

As we said in the introduction, this work is concerned with homeomorphisms which appear as monodromies of reduced holomorphic map germs defined on isolated complex surface singularities. Anne Pichon proved \cite[Th\'eor\`eme 5.4]{Pich}, using a deep result by Winters \cite[Theorem 4.3]{Win}, the following purely topological characterization of these homeomorphisms. This result establishes  the connection of \Cref{thm:positive_factorization} with the theory of mapping class groups (see \Cref{prop:equivalence}).

\begin{thm}[Pichon]\label{thm:anne_realization}
	Let $\phi: \Si \to \Si$ be an homeomorphism of a compact, connected and oriented surface with non-empty boundary. Then there exists an isolated complex surface singularity $X$ and a reduced holomorphic map germ $f: X \to \C$ whose associated monodromy is $\phi$ if and only if some power $\phi^n$ of the homeomorphism is isotopic to a composition of powers of right-handed Dehn twists around disjoint simple closed curves including curves parallel to all boundary components. This property is equivalent to all fractional Dehn twists and screw numbers being strictly positive.
\end{thm}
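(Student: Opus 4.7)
The plan is to prove the equivalence in three parts: the forward implication, the backward implication (which relies on Winters' theorem), and the equivalence of the two topological formulations.

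For the forward implication ($\Rightarrow$), I would start with a reduced germ $f: (X,0) \to (\C,0)$ on an isolated surface singularity and take a good embedded resolution $\pi: \widetilde{X} \to X$ for which $(f\circ\pi)^{-1}(0)$ is a simple normal crossings divisor $\sum m_i E_i$ with $m_i>0$. Following the A'Campo / L\^e / Eisenbud--Neumann philosophy, the Milnor fiber $\Si = f^{-1}(\delta)\cap B_\epsilon$ is obtained by plumbing cyclic covers of the $E_i$ (branched at intersection points and points where the strict transform meets $E_i$) along annular neighborhoods coming from the intersections $E_i\cap E_j$ and from the points where the strict transform meets the exceptional divisor. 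The geometric monodromy $\phi$ preserves this decomposition and acts by rotation of order dividing $m_i$ on each cover and by a screw motion on each gluing annulus. Taking $n=\mathrm{lcm}(m_i)$, the power $\phi^n$ is the identity on each periodic piece and a power of a Dehn twist on each annular piece, with exponent proportional to $n\cdot E_i\cdot E_j/(m_i m_j)>0$ in the interior and proportional to $n/m_i>0$ near the strict transform; the positivity of these exponents (which would produce right-handed twists because the complex orientation of $\widetilde X$ induces the correct orientation on the annuli) gives the desired positive factorization of $\phi^n$ including all boundary-parallel curves.

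For the backward implication ($\Leftarrow$), the key tool is Winters' theorem. Given $\phi\in\modgr$ satisfying the hypothesis, I would extract from its Nielsen--Thurston decomposition a weighted graph $\Gamma$ whose vertices correspond to periodic pieces and whose edges correspond to curves in $\calC$. To each vertex I would assign a positive integer multiplicity $m_v$ determined by the combined data of the period of the piece and the screw numbers / fractional Dehn twist coefficients of its adjacent curves, and to each vertex a self-intersection weight (lowered if necessary by adding chains of rational leaves, a standard trick) so that the resulting intersection matrix is negative definite. One then checks that $\Gamma$ satisfies the combinatorial hypotheses of Winters' theorem \cite{Win}, which produces an isolated normal surface singularity $(X,0)$ and a reduced $f:(X,0)\to(\C,0)$ whose good resolution has dual graph $\Gamma$ with the prescribed multiplicities. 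By the forward direction applied to this $(X,f)$, the geometric monodromy of $f$ is pseudoperiodic with exactly the Nielsen--Thurston invariants encoded in $\Gamma$, so by the classification of pseudoperiodic homeomorphisms up to isotopy it is isotopic to $\phi$.

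The equivalence between "some $\phi^n$ is a composition of powers of right-handed Dehn twists around disjoint simple closed curves including all boundary-parallel ones" and "all fractional Dehn twist coefficients and screw numbers are strictly positive" is a direct consequence of \Cref{def:fractional_dehn_twist}, \Cref{def:screw_number}, and \Cref{rem:formula_screw}. Indeed, if $\phi^n$ has a positive factorization along the Nielsen--Thurston system, then the exponent with which $t_C$ appears equals $n\cdot\mathrm{sc}(\phi,C)/\alpha$ for an interior orbit and $n\cdot\mathrm{fr}(\phi,B)$ for a boundary component, so positivity of the exponents is equivalent to positivity of the invariants; conversely, positivity of all these invariants lets us choose $n$ clearing denominators so that $\phi^n$ is isotopic to the corresponding positive product.

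The main obstacle is the backward direction: matching the abstract topological data of $\phi$ with the analytic output of Winters' theorem, and in particular choosing self-intersection weights so that the intersection form is negative definite while preserving the multiplicities $m_v$ prescribed by the screw numbers and fractional Dehn twist coefficients. The forward direction is comparatively routine, since the plumbing description of the Milnor fiber via the resolution is classical; the positivity of the twist exponents there follows directly from $m_i>0$ and from the positivity of the intersection numbers $E_i\cdot E_j$ for $i\ne j$.
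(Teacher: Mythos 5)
The paper does not give a proof of this theorem: it is stated and used as a black box, attributed to Pichon's Th\'eor\`eme 5.4 in \cite{Pich}, which in turn rests on Winters \cite{Win}. So there is no internal proof to compare against; what you have done is to reconstruct the argument from the literature, and your reconstruction does follow the same broad architecture that Pichon's proof uses. The forward direction via resolution and plumbing (A'Campo/L\^e/Eisenbud--Neumann) and the backward direction via Winters' realization theorem, followed by an appeal to the Nielsen/Matsumoto--Montesinos classification of pseudoperiodic homeomorphisms up to isotopy, is exactly the right skeleton.

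Two points deserve caution. First, the twist exponent you write for the annulus at an intersection $E_i\cap E_j$, namely $n\cdot E_i\cdot E_j/(m_i m_j)$, is an oversimplification: in the Nielsen--Thurston system only the annuli separating adjacent \emph{periodic pieces} appear in $\calC$, and these generally correspond to whole chains of valence-$2$ exceptional curves rather than single edges of the dual graph, so the screw number is controlled by the determinant of the chain and the two end multiplicities, not merely $m_im_j$; moreover the claim that the ``complex orientation induces the correct orientation on the annuli'' is precisely the place where sign conventions can and do bite (the paper explicitly warns that its screw-number sign convention differs from Matsumoto--Montesinos). What survives is the qualitative conclusion that all these quantities are strictly positive, which is all the theorem needs, but if you intend to present this as a proof rather than a sketch you should either cite the precise formula or derive it from the local model $z_1^{m_1}z_2^{m_2}=\delta$ rather than asserting it. Second, the backward direction is where nearly all the content lies, as you correctly flag: turning the abstract pseudoperiodic data into a weighted graph whose intersection matrix can be made negative definite while respecting Winters' numerical constraints and the prescribed multiplicities is the substance of Pichon's Th\'eor\`eme 5.4, and your outline only gestures at it. As a blind reconstruction of the cited result your proposal is honest and structurally correct, but it should be read as a roadmap pointing at \cite{Pich} and \cite{Win}, not as a self-contained proof.
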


We name this important class of pseudoperiodic homeomorphisms once and for all:

\begin{definition}\label{def:fully_right}
	If $\phi \in \modgr$ is a pseudoperiodic homeomorphism with strictly positive fractional Dehn twist coefficients and screw numbers, we say that it is {\bf fully right-veering}.
\end{definition}

One can see in \cite{KoI} that the definition of right-veering homeomorphism refers only to the behavior of $\phi$ at the boundary of $\Si$. Roughly, a fully right-veering pseudoperiodic homeomorphism is a right-veering pseudoperiodic homeomorphism whose restrictions to all periodic pieces are also right-veering.

\section{Main theorem}\label{sec:main_thm}

In this section we prove that all fully right-veering pseudoperiodic homeomorphisms admit positive factorizations (\Cref{thm:positive_factorization}). We start by proving a proposition that has an interest of its own and is crucial in the proof this theorem.

\begin{prop}\label{prop:extension_to_family}
    Let $\omega:\bar\calX \to Q$ be a deformation of a strongly pseudoconvex 
    manifold  $\bar X:=\omega^{-1}(0)$ with smooth boundary, where $Q \subset  \C$ is a disk, and all the fibers are strongly pseudoconvex. Let $f:\bar X 
    \to 
    \C$ be a holomorphic map. Then there exists a neighborhood $Q' 
    \subset Q$ of $0$ and a smooth map $F:\omega^{-1}(Q') \to \C$ such that 
    $F|_{\omega^{-1}(t)}$ is holomorphic for all $t \in Q'$ and 
    $F|_{\omega^{-1}(0)}=f$.
\end{prop}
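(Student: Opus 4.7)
The plan is to use the $C^\infty$-triviality of the deformation to transfer the problem to a fixed manifold $\bar X$ equipped with a smoothly varying family of complex structures $\{J_t\}_{t\in Q'}$, and then kill the antiholomorphic part of $f$ fiberwise by the stability theorem for the Kohn solution (\Cref{thm:kohn_families}).

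\textbf{Step 1: Transfer to a family of complex structures on $\bar X$.} Since $\omega : \bar\calX \to Q$ is a locally trivial $C^\infty$-fibration and $Q$ is a disk, we can choose a disk $Q' \subset Q$ around $0$ and a $C^\infty$-trivialization $\Psi : \bar X \times Q' \to \omega^{-1}(Q')$ with $\Psi|_{\bar X \times \{0\}} = \id_{\bar X}$. Pulling back the complex structure of each fiber $\omega^{-1}(t)$ via $\Psi(\,\cdot\,,t)$ yields a family of complex structures $\{J_t\}_{t\in Q'}$ on $\bar X$ with $J_0$ equal to the original one. Because $\Psi$ is smooth and the fiberwise complex structures are inherited from the ambient complex manifold $\bar\calX$, the induced map $t \mapsto J_t$ is continuous into $\mathcal{J}(\bar X, J_0)$ with respect to the $C^\infty$ topology, i.e.\ a smooth family in the sense of \Cref{thm:kohn_families}. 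By shrinking $Q'$ if necessary and using the openness of the strong pseudoconvexity condition, we may assume every $J_t$ makes $\bar X$ strongly pseudoconvex.

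\textbf{Step 2: The $\bar\partial$-problem on each fiber.} Viewing $f$ as a fixed smooth function on $\bar X$, set $g_t := \bar\partial_{J_t} f \in C^\infty_1(\bar X)$. Each $g_t$ is a $(0,1)$-form with respect to $J_t$ and satisfies $\bar\partial_{J_t} g_t = 0$ since $\bar\partial_{J_t}^2 = 0$. Moreover $\{g_t\}_{t\in Q'}$ is a smooth family of $1$-forms because the coefficients of $\bar\partial_{J_t}$ depend smoothly on $J_t$, and $t \mapsto J_t$ is smooth. Crucially, $g_0 = \bar\partial f = 0$, since $f$ is holomorphic on the central fiber.

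\textbf{Step 3: Apply stability of the Kohn solution.} Fix a Hermitian metric on $\bar X$. By \Cref{thm:kohn_families}, the Kohn solutions $u_t \in C^\infty_0(\bar X)$ to $\bar\partial_{J_t} u_t = g_t$ form a smooth family in $t \in Q'$. By \Cref{rem:unique_kohn}, the uniqueness characterization of the Kohn solution forces $u_0 = 0$, since $g_0 = 0$. Define
\[
\tilde F_t := f - u_t \qquad (t \in Q'),
\]
so that $\bar\partial_{J_t} \tilde F_t = g_t - g_t = 0$: thus $\tilde F_t$ is $J_t$-holomorphic on $\bar X$, and $\tilde F_0 = f$.

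\textbf{Step 4: Push forward and conclude.} Transport back via the trivialization by setting $F := \tilde F \circ \Psi^{-1} : \omega^{-1}(Q') \to \C$, where $\tilde F(x,t) := \tilde F_t(x)$. Since $\{\tilde F_t\}_t$ is smooth in $t$ and each $\tilde F_t$ is smooth on $\bar X$, $\tilde F$ is smooth on $\bar X \times Q'$, hence $F$ is smooth on $\omega^{-1}(Q')$; each restriction $F|_{\omega^{-1}(t)}$ is holomorphic with respect to the original complex structure on that fiber; and $F|_{\omega^{-1}(0)} = f$.

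\textbf{Main obstacle.} The delicate point is Step~1 together with the smoothness claim used in Step~2: one must verify that the fiberwise pullback complex structures $J_t$ indeed vary continuously in the $C^\infty$ topology on $\mathcal{J}(\bar X, J_0)$, so that the pair $(J_t, g_t)$ defines a continuous curve in the space $\mathcal{JA}_{0,1}(\bar X, J_0)$ of \Cref{rem:good_domain}. Once this is established, the rest is a direct consequence of \Cref{thm:kohn_families} and the uniqueness of the Kohn solution.
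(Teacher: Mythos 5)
Your proof is correct and follows essentially the same route as the paper: trivialize the $C^\infty$-fibration to realize the deformation as a smooth family of complex structures $J_t$ on the fixed manifold $\bar X$, form $g_t = \bar\partial_{J_t} f$, apply the Greene--Krantz stability of the Kohn solution (\Cref{thm:kohn_families}) to obtain a smooth family $u_t$ with $u_0 = 0$ by \Cref{rem:unique_kohn}, and transport $f - u_t$ back to the fibers via the trivialization. The ``main obstacle'' you flag (smooth dependence of $J_t$ on $t$ in the $C^\infty$ topology) is treated in the paper in the same way, as a consequence of the smoothness of the trivializing diffeomorphism.
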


\begin{proof}
   We recall that by our \Cref{def:deformation}, $\omega:\bar \calX \to Q$ is actually a
   $C^\infty$ locally trivial fibration and, since $Q$ is contractible, it 
   is actually a trivial fibration. Therefore  there exists a diffeomorphism $ \bar X 
   \times Q \to \bar{\calX}$ which induces diffeomorphisms $T_t:\bar{X}_t \to 
   \bar{X}$ with $T_0 = \id$. This allows us to see the 
   deformation $\omega:\bar\calX \to Q$ as a family of complex structures $J_t$ 
   on $\bar{X}$ varying smoothly with $t \in Q$. For instance, if $J_t'$ is the 
   complex structure of $\bar{X}_t$, then one can consider $J_t := T_{t \ast} J_t'$ as the
   corresponding complex structure on $\bar{X}$. And by letting  $\bar{\partial}_t'$ be the complex conjugate derivative in $\bar{X}_t$ 
   and $\bar{\partial}_t$ be the corresponding one in $\bar X$ relative to the complex 
   structure $J_t$, we find that if the smooth functions $g,f:X \to \C$ satisfy  the equation 
   $$\bar{\partial}_t g = \bar{\partial}_t f$$ then, by precomposing, the equation 
   $$\bar{\partial}_t' (g\circ T_t) = \bar{\partial}_t' (f\circ T_t)$$ holds on $\bar{X}_t$ because $(\bar{X},J_t)$ and $(\bar{X}_t, J_t')$ are isomorphic as complex manifolds via $T_t$. This discussion tells us that we can solve 
   our problem in the central fiber of our deformation equipped with different 
   complex structures varying smoothly with $t$ and that this is equivalent to 
   solving the extension problem in each fiber of the deformation as the statement of
   the proposition requires.
   
  Take a possibly smaller $Q' \subseteq Q$ disk containing $0$ such that all the complex structures $J_t$ make $\bar{X}$ strongly pseudoconvex. This is always possible since strong pseudoconvexity is an open condition. By definition, $\bar{\partial}_t f$ is a 
   $\bar{\partial}_t$-closed $(0,1)$-form for all $t$. Since the complex structures vary smoothly, we
   find that  $\{\bar{\partial}_tf\}_{t \in Q'}$ is a smooth family of forms, each of them a $(0,1)$-form closed with respect to $\bar{\partial}_t$. Therefore, we can apply Greene and Krantz' result
   (\Cref{thm:kohn_families} derived from \Cref{thm:kohn_stability}) and get that there exists a smooth family of smooth
   solutions $\{u_t:\bar{X} \to \C\}_{t \in Q'}$ for the $\bar{\partial}_t$-problems 
   $$\bar{\partial}_t u = \bar{\partial}_t f.$$ 
   
  The solution in the central fiber is $0$ since $f$ is holomorphic and so $\bar{\partial}_0f=0$ (recall \Cref{rem:unique_kohn}). That is, we have found that $u_0 =  0$. Finally we define the sought $F:\omega^{-1}(Q')\to \C$ fiberwise by  
   $F|_{\bar{X}_t}:=f \circ T_t - u_t \circ T_t = (f-u_t)\circ T_t$. By 
   construction, $\bar{\partial}'_t (F|_{\bar{X}_t})=0$ (so each restriction is 
   holomorphic). Also by construction $F|_{\bar{X}_0}=f$ because $T_0=\id$ and $u_0=0$. And finally, $F$ is $C^\infty$  because $\{u_t\}_{t \in Q'}$ is a smooth family and because $f$ and $T_t$ are smooth.
\end{proof}

Before proving the main theorem of the paper, we state a proposition which establishes the equivalence between three classes of pseudoperiodic homeomorphisms. This is nothing else than a rewriting of Anne Pichon's result using the term {\em fully right-veering} introduced in \Cref{def:fully_right}:

\begin{prop}\label{prop:equivalence}
	The following classes of pseudoperiodic homeomorphisms coincide:
	\begin{enumerate}
		\item \label{it:i} Fully right-veering homeomorphisms.
		\item \label{it:ii} Geometric monodromies associated with reduced holomorphic functions defined on isolated complex surface singularities.
		\item \label{it:iii} Pseudoperiodic homeomorphisms for which there exists $n\in \NN$ such that $\phi^n$ is a composition of powers of right-handed Dehn twists around disjoint simple closed curves including all boundary components.
    \end{enumerate}  
\end{prop}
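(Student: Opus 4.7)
The proposition is essentially a bookkeeping consequence of Pichon's theorem (\Cref{thm:anne_realization}) once the definition of \emph{fully right-veering} homeomorphism (\Cref{def:fully_right}) is spelled out, so the plan is simply to chain together the definitions with the two equivalences contained in Pichon's theorem. There is no substantial obstacle to overcome.

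By \Cref{def:fully_right}, condition \ref{it:i} is literally the conjunction of being pseudoperiodic and having all fractional Dehn twist coefficients and all screw numbers strictly positive. On the other hand, \Cref{thm:anne_realization} simultaneously asserts that \ref{it:ii} and \ref{it:iii} are equivalent, and that both are equivalent to the strict positivity of all fractional Dehn twist coefficients and screw numbers. Placing these two observations side by side immediately yields the three-way equivalence \ref{it:i} $\Leftrightarrow$ \ref{it:ii} $\Leftrightarrow$ \ref{it:iii}.

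If one prefers to see the equivalence between \ref{it:iii} and strict positivity of the invariants verified directly, it follows from \Cref{def:fractional_dehn_twist} and \Cref{def:screw_number}: writing $\phi^n$ as a composition $\prod t_{\gamma_i}^{k_i}$ of positive powers of right-handed Dehn twists around a disjoint system of curves that includes parallels to all boundary components, the exponents $k_i$ enter, up to multiplication by a positive quantity depending on $n$ and on the size of the relevant orbit, as the numerators in the defining formulas for the fractional Dehn twist coefficients and screw numbers. Strict positivity of all the $k_i$ is therefore tantamount to strict positivity of all the invariants. The converse is obtained by choosing a power of $\phi$ whose restriction is the identity away from a tubular neighborhood of the Nielsen-Thurston curve system together with the boundary, and reading off the factorization from that restriction.
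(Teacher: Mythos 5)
Your proposal is correct and follows essentially the same route as the paper, namely unwinding \Cref{def:fully_right} and citing the two equivalences packaged in Pichon's \Cref{thm:anne_realization}. If anything you are slightly more careful than the paper's own one-line proof, which elides the fact that the bridge between condition \ref{it:iii} and the strict-positivity criterion (the literal content of \Cref{def:fully_right}) is itself part of the statement of \Cref{thm:anne_realization}; your second paragraph makes that step explicit, and the third paragraph supplies an optional direct verification of it.
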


\begin{proof}
Class \cref{it:iii} is just the definition of \cref{it:i} (see \Cref{def:fully_right}). And that \cref{it:ii} is the same class of homeomorphisms as \cref{it:iii} is the content of  Anne Pichon's result (\Cref{thm:anne_realization}).
\end{proof}

We are ready to state and prove our main result.
\begin{thm}\label{thm:positive_factorization}
	All fully right-veering homeomorphisms admit a positive factorization.
\end{thm}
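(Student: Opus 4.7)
The plan is to convert the purely topological statement into a singularity-theoretic one via \Cref{prop:equivalence}, and then to solve the problem in the holomorphic category. Given a fully right-veering $\phi \in \modgr$, \Cref{prop:equivalence} produces an isolated complex surface singularity $X$ and a reduced holomorphic map germ $f:X \to \C$ whose geometric monodromy on the Milnor-L\^e fibration $f^{-1}(\partial D_\delta)\cap X \cap B_\epsilon \to \partial D_\delta$ coincides with $\phi$. Passing to the minimal resolution $\pi:\tilde X \to X$, I obtain a strongly pseudoconvex surface with boundary whose exceptional set contains no curves of the first kind, and I lift $f$ to $\tilde f:= f\circ \pi$, which is still reduced on the boundary and defines an equivalent Milnor-L\^e fibration (the resolution is an isomorphism away from the singular point).

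The next step is to deform $\tilde X$ into a Stein manifold. I would apply \Cref{thm:small_stein_deformation} of Bogomolov--De Oliveira to obtain a deformation $\omega:\bar\calX \to Q$ whose central fiber is $\tilde X$ and whose fibers $\bar X_t$ for $t\neq 0$ contain no compact analytic curves without boundary, hence by \Cref{rem:difference} are Stein domains. Applying \Cref{prop:extension_to_family} to $\tilde f$ and this deformation produces a smaller disk $Q'\subset Q$ and a smooth map $F:\omega^{-1}(Q')\to \C$ whose restriction $F_t$ to each fiber is holomorphic and satisfies $F_0 = \tilde f$. By a continuity/transversality argument (this is where the transversality proposition alluded to in the outline of \cref{sec:main_thm} should enter), the fibration defined by $F_t$ on $F_t^{-1}(\partial D_\delta)\cap \bar X_t$ is, for $|t|$ small enough, $C^\infty$-equivalent to the original Milnor-L\^e fibration of $\tilde f$ on the central fiber. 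Hence the geometric monodromy of $F_t$ coincides, up to isotopy, with $\phi$.

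Now that the problem has been transferred to a Stein domain, classical Picard-Lefschetz theory applies. On the Stein manifold $\bar X_t$ there exists a generic linear perturbation $F_t + \epsilon' \ell$ with $|\epsilon'|$ arbitrarily small which is a morsification: all its critical points are Morse, lie in the interior of $\bar X_t$, and have distinct critical values clustered near the origin of $\C$. The resulting Lefschetz fibration over a small disk $D_\delta$ is equivalent as a $C^\infty$-fibration to $F_t$ on its restriction to $\partial D_\delta$, and its geometric monodromy factors as the product of the local monodromies around each critical value, each of which is a right-handed Dehn twist around the corresponding vanishing cycle. Composing these factorizations in the natural order yields a positive factorization of the total monodromy of $F_t$, hence of $\phi$.

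The main obstacle in this program is the transversality/equivalence step: one has to verify that the perturbed map $F_t$ still defines a locally trivial fibration over $\partial D_\delta$ that is isotopic to the original Milnor-L\^e fibration of $f$. This requires controlling the critical values of $F_t$ and the geometry of $F_t^{-1}(\partial D_\delta)\cap \bar X_t$ near the exceptional set as $t\to 0$, which in turn is possible precisely because \Cref{prop:extension_to_family} yields $F$ that is $C^\infty$ on $\omega^{-1}(Q')$ and because the Bogomolov--De Oliveira deformation is a trivial $C^\infty$-fibration. Once this equivalence is in place, the argument reduces to Picard-Lefschetz theory on a Stein manifold, where positive factorizations of monodromies are classical.
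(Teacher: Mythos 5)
Your proposal follows the same route as the paper's proof: invoke Pichon's characterization (\Cref{prop:equivalence}) to realize $\phi$ as the Milnor--L\^e monodromy of a reduced $f:X\to\C$, pass to the minimal resolution, apply \Cref{thm:small_stein_deformation} and \Cref{prop:extension_to_family} to obtain holomorphic extensions $\tilde f_t$ on nearby Stein fibers, use openness of transversality to keep the fibration structure, and finish with Picard--Lefschetz theory. The only minor packaging difference is at the very end: the paper argues that because $\bar X_t$ is Stein the critical set of $\tilde f_t$ on the tube is a compact analytic set missing the boundary hence a finite set of points, and then cites the classical morsifiability of each resulting isolated plane curve singularity, whereas you morsify $F_t$ globally by adding a generic linear form $\epsilon'\ell$ (which then requires re-verifying that the further perturbation preserves the fibration on $\partial D_\delta$); the two are equivalent in substance.
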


\begin{proof}

Let  $\phi:\Si \to \Si$ be a fully right-veering homeomorphism. By \Cref{prop:equivalence} (which is essentially Anne Pichon's result \Cref{thm:anne_realization}), there exists an isolated complex surface singularity $X$ and a reduced holomorphic function $f:X \to \C$ such that the monodromy of the corresponding Milnor-L\^e fibration on the tube \begin{equation}\label{eq:milnor_le_fibration}
f_{|f^{-1}(\partial D_\delta) \cap X\cap B_\epsilon}: f^{-1}(\partial D_\delta) \cap X \cap B_\epsilon \to \partial D_\delta
\end{equation} 
has fiber diffeomorphic to $\Si$ and monodromy conjugate to $\phi$ in $\modgr$.
	
	Let $\bar X$ be a Milnor representative of $X$, that is, $\bar X:= X
	\cap B_\epsilon$ where $B_\epsilon$ is a closed ball of small radius in a space where a representative of $X$ may be embedded centered at the singular point of $X$ and such that it
	intersects $X$ transversely and all the spheres contained in it centered at the singular point of $X$ also intersect $X$ transversely. We can assume as well that $\partial \bar{X}$ is strongly 
	pseudoconvex (recall \Cref{def:spc_stein}). The minimal resolution $\tilde{X} \to 
	X$ induces a map $\pi:\bar{\tilde{X}} \to \bar X$ where $\bar{\tilde{X}}$ is a strongly 
	pseudoconvex surface with smooth boundary that contains a nontrivial 
	$1$-dimensional exceptional set $A= \cup_j A_j$ such that no curve $A_j$ is exceptional of the first kind (smooth and rational compact analytic curve with $A\cdot A=-1$). By Laufer and Bogomolov-De Oliveira's result 
	(\Cref{thm:small_stein_deformation})  there exists a deformation 
	$\omega:\bar \calX \to Q$ of $\bar{\tilde{X}}=\omega^{-1}(0)$ over a small disk such 
	that no fiber $\bar X_t:=\omega^{-1}(t)$ with $t \neq 0$ contains compact 
	analytic curves without boundary. By taking the deformation disk $Q$ small enough, we can ensure that the 
	boundaries $\partial \bar X_t$ are strongly pseudoconvex. Indeed, the 
	central fiber has a strongly pseudoconvex boundary and this is a property 
	which is stable under small perturbations. Hence, we can assume that $\bar 
	X_t$ is a Stein domain for all $t \neq 0$.
	
Since the morphism $\pi:\bar{\tilde{X}} \to \bar X$ induced by the minimal resolution is an isomorphism outside the singular point of $X$, the Milnor-L\^e fibration of \cref{eq:milnor_le_fibration} lifts to an equivalent fibration \begin{equation}\label{eq:milnor_le_res}
\tilde{f}_{|\tilde{f}^{-1}(\partial D_\delta) \cap \bar{\tilde{X}} }:\tilde{f}^{-1}(\partial D_\delta) \cap \bar{\tilde{X}}  \to \partial D_\delta,
\end{equation}
where $\tilde{f}:=f\circ\pi.$

Now we apply \Cref{prop:extension_to_family} to our deformation $\omega$ and we get a, maybe, smaller disk $Q' \subset Q$ and a map $F: \omega^{-1}(Q') \to Q'$ that extends $\tilde{f}|_{\bar{\tilde{X}}}:\bar{\tilde{X}} \to \C$. Moreover, that same proposition tells us that the restriction of $F$ to $\bar{X}_t$ is holomorphic for all $t \in Q'$ and the first order partial derivatives of $F$ vary continuously with $t$ because $F$ is $C^\infty$.
	
	Let $\tilde{f}_t:=F|_{\bar X_t}$. Since transversality is an open condition and the partial derivatives of the functions $\tilde{f}_t$ vary continuously on $t$ we find that there exists a (maybe smaller) disk $Q'' \subset Q'$ containing $0$ such that the maps
	
	\begin{equation}\label{eq:milnor_t_fibration} f_{t|\tilde{f}_t^{-1}(\partial D_\delta) \cap \bar X_t}:\tilde{f}_t^{-1}(\partial D_\delta) \cap \bar X_t \to \partial D_\delta\end{equation}
	are locally trivial fibrations equivalent to \cref{eq:milnor_le_res} (and so to \cref{eq:milnor_le_fibration}) for all $t \in Q''$.
	
 Again, by shrinking maybe a little bit more $Q''$, we can assert  that $\tilde{f}_t$ 
 has no critical points on $\partial \bar X_t$ because $f$ did not have 
 critical points on $\partial \bar{\tilde{X}}$ and being a submersion is a stable 
 property. Since $\bar X_t$ is a Stein domain, it cannot contain compact analytic curves without boundary. Moreover, the set of critical points of $$f_{t|\tilde{f}_t^{-1}(D_\delta) \cap \bar X_t}:\tilde{f}_t^{-1}(D_\delta) \cap \bar X_t \to D_\delta,$$ is a compact analytic set that does not intersect the boundary (because $f$ is a submersion restricted to $\partial \bar{X}_t$). We conclude that this critical set must consist of a finite collection of (isolated) points.
 
 Therefore, the germs of the fibers of $f_t$ at the critical points of $f_t$ are isolated plane curve singularities. We conclude by a direct application of Picard-Lefschetz theory 
 that the monodromy of \cref{eq:milnor_t_fibration} admits a positive factorization since it is a composition of monodromies of isolated plane curve singularities (and these are known to admit morsifications). Then the monodromy of the equivalent fibrations \cref{eq:milnor_le_res} and \cref{eq:milnor_le_fibration} also admit positive factorizations.
\end{proof}

In \cite{KoII}, Honda, Kazez and Mati\'c  proved that every right-veering and freely periodic homeomorphism of the punctured torus admits a positive factorization. This proof is carried out by checking each case since the list of freely periodic homeomorphisms of the torus is short. Next we observe that the previous theorem is a full generalization of this fact, in particular we obtain the following straightforward corollary.

\begin{cor}\label{cor:periodic}
	Freely periodic homeomorphisms with positive fractional Dehn twist coefficients admit positive factorizations.
\end{cor}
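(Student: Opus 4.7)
The plan is to deduce this directly from \Cref{thm:positive_factorization} by showing that a freely periodic homeomorphism with positive fractional Dehn twist coefficients is automatically fully right-veering in the sense of \Cref{def:fully_right}.

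First I would unpack the hypothesis. A \emph{freely periodic} homeomorphism $\phi\in\modgr$ is one whose class in $\pmodgr$ is periodic, i.e., there exists $n\in\NN$ such that $\phi^n$ is isotopic to the identity via an isotopy free on the boundary. Equivalently, $\phi^n$ is isotopic to a product of powers of boundary Dehn twists $\prod_i t_{B_i}^{k_i}$. In particular $\phi$ is pseudoperiodic, and its Nielsen-Thurston decomposition in \Cref{thm:nt_classification} can be chosen with the minimal collection $\calC$ consisting of just curves parallel to the boundary components: the whole surface is a single periodic piece, and no essential interior curves are needed.

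The consequence of this observation is that the numerical invariants attached to $\phi$ by \Cref{def:fractional_dehn_twist} and \Cref{def:screw_number} reduce entirely to the fractional Dehn twist coefficients $\mathrm{fr}(\phi,B_i)$ at the boundary components; there are simply no non-boundary-parallel orbits of invariant curves to which \Cref{def:screw_number} would apply. Hence the hypothesis that all fractional Dehn twist coefficients are strictly positive is exactly the statement that $\phi$ is fully right-veering.

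Once this identification is made, the corollary is immediate from \Cref{thm:positive_factorization}: a fully right-veering pseudoperiodic homeomorphism admits a positive factorization. The only subtlety, and the step I would take most care with, is justifying that for a freely periodic $\phi$ one may indeed take $\calC$ to consist only of boundary-parallel curves; this uses that the periodic representative of $\phi$ on the whole surface in the sense of Nielsen-Thurston can be realized with no essential internal invariant multicurve forced upon it, since $\phi^n\in \pmodgr$ is isotopic to a product of boundary Dehn twists and therefore is already in normal form with the empty internal curve system. With this in hand, no extra argument beyond an invocation of the main theorem is required.
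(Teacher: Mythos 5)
Your proof is correct and is essentially the argument the paper intends (the paper simply labels the corollary ``straightforward'' without spelling it out). You correctly identify the one substantive point to be verified: that for a freely periodic $\phi$ the minimal invariant multicurve $\calC$ of \Cref{thm:nt_classification} consists only of boundary-parallel curves, so that \Cref{def:fully_right} imposes no condition beyond the positivity of the fractional Dehn twist coefficients (and, via the remark following \Cref{def:screw_number}, the screw numbers at boundary-parallel orbits coincide with those coefficients anyway). With that observation the corollary is an immediate specialization of \Cref{thm:positive_factorization}.
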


Next, we prove a second corollary of \Cref{thm:positive_factorization}. It is a generalization of \cite[Theorem 4.2]{Col}
where the authors prove it for freely periodic monodromies with positive fractional Dehn twist coefficients.

\begin{cor}\label{cor:colin_honda}
	Let $\phi:\Si \to \Si$ be a fully right-veering homeomorphism, then the associated contact structure is Stein fillable.
\end{cor}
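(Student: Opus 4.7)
The plan is to chain together the main theorem of the paper with the Loi--Piergallini--Giroux characterization recalled earlier as \Cref{thm:dehn_fillable}. The statement is essentially immediate once one notices the vocabulary match: the hypothesis provides a fully right-veering homeomorphism $\phi$, and the conclusion concerns the contact structure supported by the open book with page $\Si$ and monodromy $\phi$.

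First, I would invoke \Cref{thm:positive_factorization} applied to $\phi$ to conclude that $\phi \in \dehn$, i.e.\ $\phi$ admits a factorization as a product of right-handed Dehn twists. Second, I would recall (from the Giroux correspondence) that the open book $(\Si,\phi)$ supports a unique (up to isotopy) contact structure $\xi_\phi$ on the 3-manifold obtained as the open book; this is the contact structure ``associated with $\phi$'' referred to in the statement. Third, since the monodromy of this supporting open book lies in $\dehn$, \Cref{thm:dehn_fillable} (Loi--Piergallini, Giroux) immediately yields that $\xi_\phi$ is holomorphically fillable, which is the same as being Stein fillable in this context.

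There is essentially no obstacle; the corollary is a one-line combination of two previously stated results. The only mild point worth flagging in the write-up is the identification of ``the associated contact structure'' in the statement with the Giroux-supported contact structure of $(\Si,\phi)$, and the standard identification of holomorphic fillability with Stein fillability for contact 3-manifolds, so that the conclusion of \Cref{thm:dehn_fillable} matches the language of the corollary.
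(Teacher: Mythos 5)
Your proposal is correct and matches the paper's own first proof of the corollary: the paper likewise applies \Cref{thm:positive_factorization} to place $\phi$ in $\dehn$ and then cites Loi--Piergallini and Giroux (\Cref{thm:dehn_fillable}) to conclude Stein fillability. The paper also records a second, independent proof that bypasses the main theorem and argues directly from Pichon's realization result together with the Laufer--Bogomolov--De~Oliveira deformation, but your argument is exactly the intended one-line deduction.
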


We give two proofs of this corollary. The first one is a direct consequence of our main result \Cref{thm:positive_factorization} and a result by Giroux \cite{Gir} and Loi-Piergallini \cite{Loi}. The second one does not use our result and relies directly on the result by Anne Pichon and the result by Laufer-Bogomolov-DeOliveira.

\begin{proof}[First proof of \Cref{cor:colin_honda}]
By \cite[Theorem 3.4]{Loi}, \cite[Corollaire 7]{Gir}, if a monodromy admits a positive factorization, then its associated contact structure is Stein fillable.
\end{proof}

\begin{proof}[Second proof of \Cref{cor:colin_honda}]
	By Anne Pichon's result, \Cref{thm:anne_realization}, we know that the open book associated with $\phi$ appears as the link of an isolated complex surface singularity. Take a resolution of the singularity and apply Laufer-Bogomolov-DeOliveira's result, \Cref{thm:small_stein_deformation}, to obtain a Stein surface with isomorphic strongly pseudoconvex boundary which realizes the Stein filling. Note that by a construction made by Patrick Popescu in \cite[Proposition 5.8]{Pop}, this surface is a Milnor fiber of a complex surface singularity (in general non-normal) with the given topological type.	
\end{proof}

\begin{remark}\label{rem:stein_fillability}
	It was believed for some time that if a contact structure is Stein fillable then {\em all} of its associated open
	books have monodromy admitting a positive factorization. This was proven
	false by giving quite explicit examples in at least two different papers: \cite[Theorem 1.3]{Wand} and \cite[Theorem 1.1]{Bak}.
	
	In particular, we remark that there was no known way of going from  \cite[Theorem 4.2]{Col} to our \Cref{cor:periodic}. 
\end{remark}

We pose the following, weaker question,  that seems reasonable in view of the results of the present paper:

\textbf{Question.} Let $\phi \in \modgr$ and assume that the associated contact structure is Milnor fillable in the sense of \cite{Cau}, that is, it is contactomorphic to the link of some
isolated complex surface singularity. Is it then true that $\phi$ admits always
a positive factorization? What if we impose that $\phi$ be pseudoperiodic? 

\section{Consequences of the main theorem}\label{sec:consequences}

In this section we define an invariant of a mapping class $\phi \in \modgr$ for all $g,r$ except for a small family of degenerate cases. This invariant is a non-negative integer that measures how far is a mapping class from being in $\dehn$. This together with a theorem from \cite{Horn} and our main result (\Cref{thm:positive_factorization}) gives sufficient conditions for a pseudoperiodic homeomorphism to be in $\dehn$ (see \Cref{thm:criterion}).

First we introduce a notion that we use in this section. For a surface $\Si_{g,r}$ denote by $\calB$ the mapping class resulting of the composition of a single right-handed Dehn twist around each boundary component, that is, $\calB := t_{B_1} \cdots t_{B_r}$. We call $\calB$ a {\bf boundary multitwist}. 

For all $g \geq 0$ and $r=0$  all the elements in the pure mapping class group $\pmodgr$ (recall \Cref{not:preliminaries}) admit a positive factorization along non-separating simple closed curves. A proof of this statement can be found in \cite[page 124]{Farb} under the name of ``A strange fact''. In the \Cref{prop:pos_fact} we observe that a similar argument together with the main result from  (\cite{Horn}) implies a much more general result. Observe that throughout this section we drag the constraint of having to deal with non-separating simple closed curves. We explain the impact of this constraint in \Cref{rem:cons_non_separating}.

We recall the piece of \cite[Theorems A and B]{Horn} that we use in this work:

\begin{thm}\label{thm:positive_fact_boundary}
Given $\phi \in \modgr $, let $L(\phi)$ be the supremum taken over the number of positive Dehn twists  around non-separating simple closed curves appearing in a positive factorization of $\phi$ . Then

\[
L(\calB) =
\begin{cases}
+ \infty & \text{if $g=1$ and $r >9$ or $g \geq 2$ and $r >4g+4$}  \\
- \infty  & \text{if $r \leq 2g -4$} \\
\mathrm{finite} &  \mathrm{otherwise}
\end{cases}
\]

and 

\[
L(\calB^k) =
\begin{cases}
12k & \text{if $g=1, k \geq 1$ and $r \leq 9$}  \\
+ \infty  & \text{if $g \geq 2$ and $k \geq 2$} \\
\end{cases}
\]
The symbol $+\infty$ means that arbitrarily long factorizations exists, and $-\infty$ means that there does not exist such factorizations.
\end{thm}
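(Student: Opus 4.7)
The approach is to translate positive factorizations of $\calB^k$ into the language of Lefschetz fibrations and 4-manifold topology. A positive factorization of $\calB^k$ into $n$ Dehn twists along non-separating simple closed curves gives rise to a Lefschetz fibration $\pi\colon X \to D^2$ with regular fiber $\Si_{g,r}$, exactly $n$ singular fibers all of non-separating type, and monodromy $\calB^k$ along $\partial D^2$. The total space has $\chi(X) = n + (2-2g-r)$ and signature $\sigma(X)$ computable as a sum of Meyer cocycle contributions along the vanishing cycles. Thus studying $L(\calB^k)$ becomes a problem of determining the existence and size of such Lefschetz fibrations with prescribed boundary open book.

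For the upper bounds and exact values, the plan is to combine signature/Euler-characteristic inequalities with classical Lefschetz fibration theory. For $g=1, r \le 9$, one exploits the fact that genus-one Lefschetz fibrations over $D^2$ come from restrictions of the standard $E(k)$-type families on $\mathbb{CP}^2$ blown up at $9$ points; the signature formula $\sigma(X) = -n$ together with $\chi(X) = n + 1 - r$ and the rigid form of the boundary open book forces $n = 12k$. For the $r \le 2g-4$ case, the claim is that no positive factorization of $\calB$ exists at all into non-separating Dehn twists; this must be ruled out by a sharp signature/lattice obstruction applied to $X$, showing that no 4-manifold bounding the required open book can have the intersection form that a non-separating Lefschetz fibration would impose.

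For the $+\infty$ cases the strategy is explicit construction: first exhibit a single positive factorization of $\calB$ or $\calB^k$ using chain relations in $\modgr$, and then iterate lantern substitutions, each of which replaces three non-separating Dehn twists by four, to produce positive factorizations of unbounded length. The numerical hypotheses $g=1, r > 9$, or $g \ge 2, r > 4g+4$, or $g \ge 2, k \ge 2$ are precisely the thresholds that guarantee both the existence of a base factorization with the right configuration of vanishing cycles and the presence of an embedded four-holed sphere supporting repeated lantern substitutions; below these thresholds there is no room inside $\Si_{g,r}$ to carry out the substitution indefinitely.

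The hard part, and the one I would expect to spend the most effort on, is the lower-bound case $r \le 2g-4$ giving $L = -\infty$: proving complete non-existence of any positive factorization of $\calB$ into non-separating Dehn twists is not a counting argument but a structural one, requiring a sharper signature or Neumann--Raymond style inequality on the putative filling together with a parity/lattice argument to preclude sporadic constructions. The other cases, once the Lefschetz-fibration dictionary is in place, reduce to explicit constructions on the $+\infty$ side and to a signature computation on the finite side.
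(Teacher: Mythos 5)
The paper does not prove this theorem: it is introduced with the sentence ``We recall the piece of [Theorems A and B] from [Horn] that we use in this work,'' so it is a quoted result of Baykur, Monden and Van Horn-Morris, and the only ``proof'' in the text is that citation. There is therefore no in-paper argument to compare your sketch against; the paper's contribution at this point is a restatement, not a derivation.

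Evaluated on its own merits, your outline does gesture at the right circle of ideas---translating positive factorizations into Lefschetz fibrations over the disk, Euler characteristic and signature bookkeeping for the finite and nonexistence regimes, and lantern-type substitutions for the $+\infty$ regimes---but it is a roadmap, not a proof, and several steps would not go through as stated. (1) The assertion that genus-one Lefschetz fibrations over $D^2$ ``come from restrictions of the standard $E(k)$-type families'' paraphrases Moishezon's classification of \emph{closed} genus-one Lefschetz fibrations; it does not directly constrain a fibration over a disk with boundary monodromy $\calB^k$, and extracting the exact value $L(\calB^k)=12k$ for $g=1$, $r\le 9$ requires a separate argument pinning down the boundary-capping behavior. (2) For $r\le 2g-4$ you only name the expected \emph{shape} of an obstruction (``a sharper signature or Neumann--Raymond style inequality'' plus ``a parity/lattice argument'') without producing one; this is precisely the part you flag as the hard step, and as written it is a hope rather than a proof. (3) The thresholds $r>9$ for $g=1$, $r>4g+4$ for $g\ge2$, and $k\ge2$ for $g\ge2$ in the $+\infty$ cases are asserted to be ``precisely the thresholds'' at which a base factorization and an embedded four-holed sphere for iterated lantern substitution coexist, but this numerology is not derived; matching these cutoffs requires exhibiting explicit relations in $\modgr$ and carrying out the bookkeeping, none of which appears in your sketch. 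To obtain an actual proof rather than a plausibility argument, the place to look is the cited paper of Baykur--Monden--Van Horn-Morris, where all three regimes are handled in detail.
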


\begin{remark}[Genus $0$ case]\label{rem:case_g0}
	Observe that in the theorem above, as stated in \cite{Horn},  the $g=0$ case is included in the ``otherwise'' part of the of the definition of $L(\calB)$, and therefore $L(\calB)$ evaluates as ``finite''. This cannot be the case since when $g=0$ there are no curves that are at the same time non-nullhomotopic and non-separating  (cutting along an essential non-separating simple closed curve reduces genus by $1$). Hence the case of genus zero is not covered in the theorem. We use this theorem heavily in this section, so we leave out this case as well.
	
	However, we  check that this does not leave out a lot of pseudoperiodic homeomorphisms: the class of pseudoperiodic elements of $\mathrm{Mod}_{0,r}$ is not very rich. 
	
	When $r=1$ we are dealing with a disk so there is not much to do here. If $r=2$ the surface is an annulus, $\mathrm{Mod}_{0,r} \simeq \ZZ$ and it is trivial to know which elements admit a positive factorization.
	
	Suppose now that $r\geq 3$. Let us look first for the freely periodic elements of  $\mathrm{Mod}_{0,r}$, this is the same as looking for the periodic elements of $\mathrm{PMod}_{0,r}$. Since these periodic elements admit periodic representatives (by the Nielsen realization theorem \cite[Theorem 7.2]{Farb}), this is equivalent to looking for periodic homeomorphisms of the sphere with $r$ fixed points. But a direct application of Hurwitz formula shows that these do not exist. 
	
	We claim that all the orbits in $\calC$ (recall \Cref{thm:nt_classification}) consist of just one curve. Let $C_1, \ldots, C_k$ be an orbit. All these curves must be separating. Since they are non-nullhomotopic it must be the case that the connected components on  both sides of each curve contain at least one boundary component of $\Si_{0,r}$. Since we are considering homeomorphisms  that fix boundary components these components must be invariant by $\phi$ and so each of the curves $C_1, \ldots, C_k$ must be invariant, therefore $k=1$.
		
	Putting together the previous two paragraphs we obtain that any pseudoperiodic homeomorphism of $\Si_{0,r}$ consists of a composition of powers of Dehn twists around disjoint simple closed curves.
	
	Having described the pseudoperiodic homeomorphisms of $\Si_{0,r}$ and in view of the fact that there is no hope to apply \Cref{thm:positive_fact_boundary} in this context, we do not deal with the case $g=0$ in the rest of the section.
\end{remark}

We go back to what we announced at the beginning of this section and prove that $\pmodgr$ is generated as a semigroup by right-handed Dehn twists around non-separating simple closed curves in a lot of  cases. For this first result we actually only use the second part of \Cref{thm:positive_fact_boundary} which corresponds with a portion of \cite[Theorem B]{Horn}.

\begin{prop}[\textbf{A stranger fact}]\label{prop:pos_fact}
	Every element in the pure mapping class group $\pmodgr$ can be written as a composition of right-handed Dehn twists along non-separating simple closed curves for all $g \geq 1$ and $r \geq 1 $ except for the cases when one has simultaneously $g=1$ and $r > 9$.
\end{prop}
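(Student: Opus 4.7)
The plan is to extend the closed-surface ``strange fact'' argument to the bounded setting. The two main ingredients are a positive relation $1 = t_{a_1} \cdots t_{a_n}$ in $\pmodgr$ in which every $a_i$ is non-separating, and the change of coordinates principle.

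For the relation, recall the short exact sequence
\[ 1 \longrightarrow \ZZ^{r} \longrightarrow \modgr \longrightarrow \pmodgr \longrightarrow 1, \]
whose kernel is generated by the boundary Dehn twists $t_{B_1}, \ldots, t_{B_r}$, so every power of $\calB$ becomes trivial in $\pmodgr$. Consequently, any positive factorization $\calB^{k} = t_{a_1} \cdots t_{a_n}$ in $\modgr$ along non-separating curves descends to a positive relation $1 = t_{a_1} \cdots t_{a_n}$ in $\pmodgr$. I will extract such a factorization from \Cref{thm:positive_fact_boundary}: when $g=1$ and $1 \leq r \leq 9$, take $k=1$ with the classical $(t_a t_b)^{6}$ factorization of the single-boundary case, embedding $\Si_{1,1}$ and its two standard non-separating generators into $\Si_{1,r}$; when $g \geq 2$ and $r \geq 1$, the clause $L(\calB^{k})=+\infty$ for $k \geq 2$ supplies, for a suitable $k$, a positive factorization of $\calB^{k}$ in non-separating Dehn twists. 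Either way, we obtain a positive relation of the required form in $\pmodgr$.

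For the second ingredient, given any non-separating simple closed curve $\gamma \subset \Si_{g,r}$, the change of coordinates principle yields $h \in \pmodgr$ with $h(a_1) = \gamma$ up to isotopy, since $\pmodgr$ acts transitively on isotopy classes of non-separating simple closed curves. Conjugating the positive relation by $h$ gives
\[ 1 \; = \; t_\gamma \cdot t_{h(a_2)} \cdots t_{h(a_n)} \quad \text{in } \pmodgr, \]
so $t_\gamma^{-1} = t_{h(a_2)} \cdots t_{h(a_n)}$ admits a positive factorization in non-separating Dehn twists. Since $\pmodgr$ is generated as a group by Dehn twists along non-separating simple closed curves (e.g. by Humphries generators adapted to the bounded case), writing any $\phi \in \pmodgr$ as a word $\prod t_{\gamma_i}^{\epsilon_i}$ in such twists and substituting each $t_{\gamma_i}^{-1}$ by its positive expression yields the desired positive factorization of $\phi$. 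The main obstacle is thus localized to the first step: producing a positive factorization of some power of $\calB$ purely in non-separating Dehn twists. This is exactly the content that \Cref{thm:positive_fact_boundary} is designed to supply, and its failure in the range $g=1$, $r>9$ is precisely why those cases are excluded from the statement.
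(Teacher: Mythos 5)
Your argument follows essentially the same route as the paper's: use the second part of \Cref{thm:positive_fact_boundary} to produce a positive factorization of some power $\calB^{k}$ of the boundary multitwist in non-separating Dehn twists, project to $\pmodgr$ where $\calB$ dies, peel off one factor to get a positive expression for a left-handed non-separating Dehn twist, and close using the Change of Coordinates Principle together with the fact that $\pmodgr$ is generated by non-separating Dehn twists. The overall logic is sound.

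One specific flag: the concrete factorization you propose for $g=1$ is wrong when $r>1$. If you embed $\Si_{1,1}$ in $\Si_{1,r}$ and use the chain relation $(t_a t_b)^6 = t_\delta$, the curve $\delta$ bounding the genus-one subsurface is an \emph{interior separating} simple closed curve of $\Si_{1,r}$, not parallel to any boundary component, so $t_\delta \neq \calB$ and, more importantly, $t_\delta$ does \emph{not} become the identity in $\pmodgr$. The only case where your identification works is $r=1$, where $\delta = B_1$. What you (and the paper) should appeal to, and what the argument actually needs, is simply that $L(\calB^{k}) = 12k$ being finite for $g=1,\ r\leq 9,\ k\geq 1$ already certifies the existence of a positive factorization of $\calB^{k}$ by non-separating twists; the explicit model is not needed and your proposed one is incorrect for $r>1$.
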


\begin{proof}
	We show that one can use the same trick used by Farb and Margalit in \cite[page 124]{Farb}. Observe that the second part of \Cref{thm:positive_fact_boundary} implies that for all $g \geq 1, r \geq 1$ (except when $g=1$ and $r > 9$ at the same time), there exists in $\modgr$ a positive factorization of a power of a boundary multitwist $\calB^k$ around non-separating simple closed curves.
	
	Consider the natural surjective homomorphism of groups $\modgr \to \pmodgr$.
	For $\phi\in \modgr$ we denote by $\hat{\phi}$ its image in $\pmodgr$ by the previous projection.
	
	A boundary multitwist $\calB$ is the identity when seen in $\pmodgr$, that is, $\hat{\calB}=\hat{\id}$. Therefore we find that the identity $\hat{\id} \in \pmodgr$ can be expressed as a (non-empty) product of right-handed Dehn twists along non-separating simple closed curves $\gamma_1, \ldots, \gamma_k$. That is
	$$\htt_{\gamma_k} \cdots \htt_{\gamma_1} = \hat\id.$$
	Therefore, multiplying both sides to the right by $\htt_{\gamma_1}^{-1}$ yields a positive factorization around non-separating simple closed curves of a left-handed Dehn twist around a non-separating simple closed curve. This together with the Change of Coordinates Principle \cite[pg 37]{Farb} which says that all non-separating simple closed curves are the same up to conjugation and the fact that $\pmodgr$ is generated by (right and left-handed) Dehn twists along non-separating simple closed curves gives the result. Just observe that we can substitute any left handed Dehn twist in a factorization in $\pmodgr$ by a composition of right-handed Dehn twists.
\end{proof}

As a consequence  we obtain the following corollary, which shows that composing with enough positive boundary Dehn twists stabilizes the whole mapping class group $\modgr$ into $\dehn$. 

\begin{cor}\label{cor:obstruc_dehn}
	Let $\phi \in \modgr$ and let $t_{B_{1}}, \ldots, t_{B_r}$ be right-handed boundary Dehn twists. Then for $n_1, \ldots,  n_r \in \ZZ_{\geq0}$ big enough the element
	$$t_{B_{r}}^{n_r} \cdots t_{B_{1}}^{n_1}\phi$$
	is in $\dehn$ except in the cases $g=1$ and $r > 9$.
\end{cor}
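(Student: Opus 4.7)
The plan is to promote \Cref{prop:pos_fact} from $\pmodgr$ to $\modgr$, using the fact that the only obstruction to lifting a factorization from the pure mapping class group to $\modgr$ lives in the central subgroup generated by the boundary Dehn twists. Recall that the natural surjection $\modgr \to \pmodgr$ has kernel freely generated by $t_{B_1}, \ldots, t_{B_r}$, and these generators are central in $\modgr$ (they are supported in disjoint collar neighborhoods of the boundary components, where every element of $\modgr$ is the identity). Thus any ambiguity in lifting from $\pmodgr$ to $\modgr$ is of the form $t_{B_1}^{k_1}\cdots t_{B_r}^{k_r}$ for some integers $k_i \in \ZZ$, possibly negative; but negative powers of boundary twists can be absorbed by sufficiently large positive ones, which is exactly what the corollary asserts.

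Concretely, I would proceed as follows. Let $\hat\phi \in \pmodgr$ be the image of $\phi$. Outside the excluded range $g=1$, $r>9$, \Cref{prop:pos_fact} furnishes a positive factorization $\hat\phi = \hat t_{\gamma_1} \cdots \hat t_{\gamma_m}$ along non-separating simple closed curves $\gamma_1, \ldots, \gamma_m \subset \Si_{g,r}$. Lifting the right-hand side to $\modgr$ by choosing representatives, the element $\phi \cdot (t_{\gamma_1} \cdots t_{\gamma_m})^{-1}$ lies in the kernel of $\modgr \to \pmodgr$, hence equals $t_{B_1}^{k_1}\cdots t_{B_r}^{k_r}$ for some $k_i \in \ZZ$. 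Using centrality of the $t_{B_i}$, I can rewrite
$$\phi \;=\; t_{B_1}^{k_1} \cdots t_{B_r}^{k_r} \cdot t_{\gamma_1} \cdots t_{\gamma_m},$$
and consequently
$$t_{B_r}^{n_r} \cdots t_{B_1}^{n_1} \phi \;=\; t_{B_1}^{n_1+k_1} \cdots t_{B_r}^{n_r+k_r} \cdot t_{\gamma_1} \cdots t_{\gamma_m}.$$
Choosing each $n_i \geq \max\{0, -k_i\}$ makes every exponent on the left part non-negative, so the displayed expression is a product of right-handed Dehn twists, i.e.\ an element of $\dehn$.

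The proof has no real obstacle once \Cref{prop:pos_fact} is in hand; the only step worth double-checking is the description of the kernel of $\modgr \to \pmodgr$ as the free abelian group on boundary twists, together with their centrality in $\modgr$, both of which are standard. The restriction to $g\neq 1$ or $r \leq 9$ in the statement is imported verbatim from \Cref{prop:pos_fact}: it is precisely the regime in which a positive factorization in non-separating twists of the boundary multitwist (and hence of the identity in $\pmodgr$) is known to exist.
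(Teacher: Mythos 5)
Your proof is correct and follows essentially the same route as the paper's: both invoke \Cref{prop:pos_fact} to factor $\hat\phi$ positively along non-separating curves in $\pmodgr$, lift this to an element of $\modgr$, note that the discrepancy with $\phi$ is a product of (central) boundary Dehn twists, and absorb the negative powers by composing with sufficiently large positive boundary twists. You spell out the central-kernel bookkeeping more explicitly than the paper does, but the underlying argument is the same.
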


\begin{proof}
	Suppose that $\phi$ cannot be written as a composition of right-handed Dehn twists. Take its image $\hat\phi$ in $\pmodgr$. By \Cref{prop:pos_fact}, $\hat\phi$ can be expressed as a product of right-handed Dehn twists in $\pmodgr$.
	
	Take the composition of these right-handed Dehn twists in $\modgr$ and denote the resulting mapping class by $\tilde{\phi}$. Observe that $\phi \circ \tilde{\phi}^{-1}$ is a composition of powers of boundary Dehn twists and some of them are left-handed Dehn twists. Just compose $\phi$ with the inverse of these powers.
\end{proof}

\begin{remark}
It is worth noting that it is a trivial observation that derives from the definition of right-veering homeomorphism in \cite{KoI} that composing with enough right-handed boundary Dehn twists, one can get any homeomorphism to be
right-veering. The above corollary says that, except in the {\em exceptional cases}, composing with enough right-handed boundary Dehn twists puts any monodromy in $\dehn$!
\end{remark}

 The previous corollary contains the idea that there is an obstruction localized at the fractional Dehn twist coefficients of $\phi$ for an homeomorphism to be in $\dehn$ . Therefore, by means of \Cref{cor:obstruc_dehn} the following is well-defined:
 
\begin{definition}\label{def:correcting}
	Let $\Si_{g,r}$ be a surface such that $g > 1$ or with $g=1$ and $r\leq 9$. Let $\phi \in \modgr$. We define $N_\phi$ as the minimal natural number such that $$\calB^{N_\phi}\phi \in \dehn.$$ We call $N_\phi$ the {\bf correcting exponent} of $\phi$.  
\end{definition}

\begin{definition}\label{def:essential_element}
	We say that a pseudoperiodic mapping class $\phi \in \modgr$ is an {\bf essential} mapping class in $\modgr$ if $|\fr(\phi,B_i)|<1$ for all boundary components, $|\mathrm{sc}(\phi, C_j)| < 1$ for all regular orbits of invariant curves in $\calC$ and $|\mathrm{sc}(\phi, C_j)| < 2$ for all amphidrome orbits.
\end{definition}

Let $x \in \QQ$. Denote $\mathrm{int}(x)\in \ZZ$ the following variant of its integer part: 
$$
\mathrm{int}(x) =
\begin{cases}
\floor*{x} & \text{if $x \geq 0$}  \\
\ceil*{x} & \text{if $x < 0$} \\
\end{cases}
$$

\begin{lemma}\label{lem:essential_part}
	Let $\Si_{g,r}$ be a surface with $g,r > 0$ and let $\phi \in \modgr$ be pseudoperiodic. Assume that $\phi$ has exactly $s$ different orbits of invariant curves in its Nielsen-Thurston decomposition and let $C_1, \ldots,C_s$ be a sequence of representatives of those orbits. Then there exist  unique integers $n_1, \ldots, n_r, m_1, \ldots, m_s$ such that
	\begin{enumerate}
		\item \label{it:es_i}The mapping class
		$\tilde{\phi}:=\phi \cdot t_{B_{1}}^{n_1}\cdots
		t_{B_{r}}^{n_r} t_{C_{1}}^{m_1}\cdots t_{C_{s}}^{m_s}$ is essential.
		\item \label{it:es_ii}If $\fr(\tilde\phi,B_i) \neq 0$, its sign is the same as $\fr(\phi,B_i)$.
		\item \label{it:es_iii} If $\mathrm{sc}(\tilde \phi,C_i) \neq 0$ its sign is the same as $\mathrm{sc}(\phi,C_i)$.
	\end{enumerate} 
\end{lemma}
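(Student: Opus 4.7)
The plan is to apply the formulas of Remark~\ref{rem:formula_screw} independently at each boundary component and at each orbit representative, choosing integer exponents that translate the fractional Dehn twist coefficients and screw numbers into the essential range while preserving their signs. The key decoupling observation is that the $r+s$ curves $B_1,\dots,B_r,C_1,\dots,C_s$, together with the full $\phi$-orbit of each $C_j$, are pairwise disjoint, so a multi-twist $t_{B_1}^{n_1}\cdots t_{C_s}^{m_s}$ changes only the single invariant attached to each of these curves. More concretely, repeated use of the identity $\phi\cdot t_C = t_{\phi(C)}\cdot\phi$ shows that $(\phi\cdot t_{C_j}^{m_j})^n$ equals $\phi^n$ composed with a product of twists around curves in the orbit of $C_j$; by disjointness this product is supported away from any neighborhood of another $B_i$ or $C_k$, while on a neighborhood of $C_j$ itself one recovers the single-twist formula stated in the Remark.

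Granting the decoupling, Remark~\ref{rem:formula_screw} provides
\[
    \fr(\tilde\phi,B_i)=\fr(\phi,B_i)+n_i, \qquad \mathrm{sc}(\tilde\phi,C_j)=\mathrm{sc}(\phi,C_j)+\beta_j m_j,
\]
where $\beta_j\in\{1,2\}$ equals $1$ if the orbit of $C_j$ is regular and $2$ if it is amphidrome. I would then take
\[
    n_i:=-\mathrm{int}(\fr(\phi,B_i)), \qquad m_j:=-\mathrm{int}\!\left(\frac{\mathrm{sc}(\phi,C_j)}{\beta_j}\right),
\]
and invoke the elementary property that $x-\mathrm{int}(x)$ lies in $[0,1)$ when $x\geq 0$ and in $(-1,0]$ when $x<0$ to conclude $|\fr(\tilde\phi,B_i)|<1$ and $|\mathrm{sc}(\tilde\phi,C_j)|<\beta_j$ with the sign of each nonzero invariant preserved. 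This establishes (i)--(iii). Uniqueness is then automatic: within $(-1,1)$ (respectively $(-\beta_j,\beta_j)$) there is a single translate of $\fr(\phi,B_i)$ by an integer (resp.\ of $\mathrm{sc}(\phi,C_j)$ by a multiple of $\beta_j$) sitting on the correct side of zero, so $n_i$ and $m_j$ are forced.

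The main obstacle is the decoupling argument of the first paragraph, which goes slightly beyond the verbatim statement of Remark~\ref{rem:formula_screw} (that remark treats a single twist and a single invariant in isolation). To make it fully rigorous I would fix pairwise disjoint tubular neighborhoods of all curves involved, use the Nielsen-Thurston decomposition (\Cref{thm:nt_classification}) to ensure the whole $\phi$-orbit of each $C_j$ remains outside every other such neighborhood, and then verify locally on each neighborhood that $(\tilde\phi)^n$ agrees with $\phi^n$ up to the twists predicted by the Remark. Once this is in place, iterating the single-invariant formulas over $i$ and $j$ yields precisely the essential translate asserted by the lemma.
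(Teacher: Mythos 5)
Your proposal is correct and follows essentially the same route as the paper: the same choices $n_i=-\mathrm{int}(\fr(\phi,B_i))$, $m_j=-\mathrm{int}(\mathrm{sc}(\phi,C_j)/\beta_j)$, the same use of \Cref{rem:formula_screw} to bound the resulting invariants in $(-1,1)$ and $(-\beta_j,\beta_j)$ while preserving signs, and the same uniqueness argument via integer translation. The only difference is that you flag and sketch the ``decoupling'' issue (that the single-twist formulas of \Cref{rem:formula_screw} apply independently across the disjoint curves $B_i$ and the orbits of the $C_j$) which the paper treats as implicit; your note on how to verify it via disjoint tubular neighborhoods is a welcome clarification rather than a genuine departure.
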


\begin{proof}
	 We define $n_i:=-\mathrm{int}(\fr(\phi,B_i))$ for $i=1, \ldots, r$ and $m_j:= -\mathrm{int}(\mathrm{sc}(\phi,C_j)/\beta_j)$ for $j=1, \ldots, s$ where $\beta_j=1$ if the corresponding orbit is regular and $\beta_j=2$ if the corresponding orbit is amphidrome.
	 
	  Then by applying the formulas in  \Cref{rem:formula_screw} to each orbit and each boundary component  we obtain $$\mathrm{sc}(\tilde \phi,C_i)=\mathrm{sc}(t_{C_i}^{m_i}\phi,C_i) =\mathrm{sc}(\phi,C_i)-\mathrm{int}(\mathrm{sc}( \phi,C_i))$$ for regular orbits and $$\mathrm{sc}(\tilde \phi,C_i)=\mathrm{sc}(\phi,C_i)-\mathrm{int}(\mathrm{sc}( \phi,C_i)/2)2$$ for amphidrome orbits. An analysis of the last term in the last equations yields
	  
	  \[
	 \mathrm{int}(\mathrm{sc}( \phi,C_i)/2)2 =
	  \begin{cases}
	  \mathrm{int}(\mathrm{sc}( \phi,C_i))-1 & \text{if $\mathrm{int}(\mathrm{sc}( \phi,C_i))>0$ and odd}  \\
	  \mathrm{int}(\mathrm{sc}( \phi,C_i))+1 & \text{if $\mathrm{int}(\mathrm{sc}( \phi,C_i))<0$ and odd}  \\
	  \mathrm{int}(\mathrm{sc}( \phi,C_i)) & \text{if $\mathrm{int}(\mathrm{sc}( \phi,C_i))$ is even} \\
	  \end{cases}
	  \]
	  
	  Similarly we have  $$\fr(\tilde{\phi},B_i)=\fr(t_{B_i}^{n_i}\phi,B_i)=\fr(\phi,B_i)-\mathrm{int}(\fr(\phi,B_i))$$ for each boundary component. 
	  
	  By the previous formulas we find that the signs of fractional Dehn twists coefficients and screw numbers of $\phi$ and $\tilde{\phi}$ are preserved whenever their corresponding invariants are non-zero. Moreover, we also have that $|\fr(\tilde{\phi},B_i)| < 1$, that $|\mathrm{sc}(\tilde{\phi},C_i)| < 1$ for a regular orbit and that $|\mathrm{sc}(\tilde{\phi},C_i)| < 2$ for an amphidrome orbit. Actually, we can refine the last inequality a little bit more and  say that for amphidrome orbits: $|\mathrm{sc}(\tilde{\phi},C_i)| < 1$ if $\mathrm{sc}(\phi,C_i)$ is even and $1<|\mathrm{sc}(\tilde\phi,C_i)| < 2$ if $\mathrm{sc}(\phi,C_i)$ is odd.

	   We conclude that \cref{it:es_i}, \cref{it:es_ii} and \cref{it:es_iii} from the statement of the lemma are satisfied.
	
	 Now we prove the uniqueness part of the statement. Take some $n_i' \in \ZZ$ different from the selected $n_i$. We are going to see that if instead of considering $\tilde{\phi}$, we consider the mapping class  
	$$\tilde{\phi}':=\phi \cdot t_{B_{1}}^{n_1}\cdots t_{B_{i}}^{n_i'} \cdots
	t_{B_{r}}^{n_r} t_{C_{1}}^{m_1} \cdots t_{C_{s}}^{m_s},$$ where we have substituted $n_i$ by $n_i'$, then either \cref{it:es_i} or \cref{it:es_ii} is violated. Observe that by the second formula in \Cref{rem:formula_screw}, $$\fr(\tilde{\phi}',B_i)= \fr(\tilde{\phi},B_i) + n_i'-n_i.$$
	Since $n_i$ and $n_i'$ are distinct  integers, we have that $|n_i'-n_i| \geq 1$. On another hand we see that, because $\tilde{\phi}$ is essential, $|\fr(\tilde{\phi},B_i)|<1$. Putting these two last facts together we find that  either $$|\fr(\tilde{\phi},B_i) + n_i'-n_i| \geq 1$$ or  $$\mathrm{sign}(\fr(\tilde{\phi},B_i) + n_i'-n_i)=-\mathrm{sign}(\fr(\tilde{\phi},B_i)) =-\mathrm{sign}(\fr(\phi,B_i))$$
	which means that either $\tilde{\phi}'$ is not essential or the sign of  its fractional Dehn twist coefficient at $B_i$ is different from that of $\phi$.
	
	 A similar reasoning yields that choosing some $m_i' \neq m_i$ makes \cref{it:es_i} or  \cref{it:es_iii} false.
\end{proof}

After the previous lemma, the following is a natural definition.

\begin{definition}\label{def:essential_part}
Let $\phi \in \modgr$ be a pseudoperiodic mapping class. Its {\bf essential part} is the conjugacy class of the mapping class of $\tilde{\phi}$ characterized in \Cref{lem:essential_part}.
\end{definition}

\begin{remark}
	The screw numbers and fractional Dehn twist coefficients are conjugacy and isotopy invariants. Also recall that for any simple closed curve $\gamma$ and any homeomorphism $\psi$ we have that the equality $\psi \circ t_\gamma \circ \psi^{-1}= t_{\psi(\gamma)}$ holds in $\modgr$. Since the statement of \Cref{lem:essential_part} does not impose conditions on the curves that we are selecting for each orbit, it only makes sense to define the essential part associated to a given pseudoperiodic homeomorphism as a conjugacy class.
\end{remark}

Observe that $\phi$ is essential if and only if $\phi$ is in the conjugacy class of $\tilde{\phi}$.
Putting together the results of this section yields the following sufficiency criterion for a general family of pseudoperiodic mapping classes to be in $\dehn$.

\begin{thm}\label{thm:criterion}
	Let $\phi \in \modgr$ be a pseudoperiodic homeomorphism for some $g,r\geq 1$ except for the cases when simultaneously $g=1$ and $r>9$. Assume $\fr(\phi,B_i) >0$ for all $i=1, \ldots,r$. Let us denote by $\sn_1, \ldots, \sn_b < 0$ its negative screw numbers and assume that the corresponding orbits consist of non-separating simple closed curves. For each of the $j=1, \ldots, b$ orbits, let $\beta_j=1$ if the orbit is regular and $\beta_j=2$ if the orbit is amphidrome. Let
	\[
	k =
	\begin{cases}
	1 & \text{if $g=1, r <9$ or $g \geq 2,r \leq 2g - 4$}  \\
	2 & \text{if  $g \geq 2, r > 2g - 4$.}\\
	\end{cases}
	\]
	For each $j=1,\ldots,b$, let $d_j:=  -\mathrm{int}(\sn_j/\beta_j)+1.$ If \begin{equation}\label{eq:sufficient}
		k\sum_{j=1}^b d_j  \leq \min_{i\in\{1, \ldots, r\}} \fr(\phi, B_i)\end{equation} then $\phi$ admits a positive factorization.
\end{thm}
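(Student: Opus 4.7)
The plan is to reduce the statement to \Cref{thm:positive_factorization} in two steps. First I will correct the negative screw numbers by pre-composing $\phi$ with positive powers of Dehn twists around representatives of the offending orbits, so as to obtain a fully right-veering homeomorphism. Then I will re-introduce those correction twists inside a positive factorization by expressing the inverse of a Dehn twist along a non-separating curve as a positive product, at the cost of a central power of $\calB$, using the trick of \Cref{prop:pos_fact} and \Cref{thm:positive_fact_boundary}. The fractional Dehn twist coefficient inequality \cref{eq:sufficient} is exactly what is needed to absorb that central correction.

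For each $j=1,\ldots,b$ pick a representative $C_j$ of the $j$-th orbit with negative screw number, and set
\[
\phi_1 \;:=\; t_{C_1}^{d_1}\cdots t_{C_b}^{d_b}\,\phi .
\]
The curves $C_j$ are pairwise disjoint, so the twists commute. By \Cref{rem:formula_screw}, and since $\sn_j<0$ forces $\mathrm{int}(\sn_j/\beta_j)=\ceil{\sn_j/\beta_j}$, the screw number at $C_j$ becomes $\sn_j+\beta_j d_j\in(0,\beta_j]$, which is strictly positive. The remaining screw numbers are unchanged, and so are all fractional Dehn twist coefficients, because the twists $t_{C_j}$ are supported in tubular neighbourhoods of interior curves in $\calC$ and the orbits of those curves under $\phi$ remain disjoint from a collar of $\partial\Sigma$. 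Hence $\phi_1$ is fully right-veering, and \Cref{thm:positive_factorization} gives a positive factorization $\phi_1=R$.

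For the second step, \Cref{thm:positive_fact_boundary} with the value of $k$ prescribed by the statement yields a positive factorization $\calB^k=t_{\gamma_1}\cdots t_{\gamma_m}$ with each $\gamma_i$ non-separating. Exactly as in the proof of \Cref{prop:pos_fact}, and using that $\calB$ is central in $\modgr$, multiplying on the left by $t_{\gamma_1}^{-1}$ produces the relation $t_{\gamma_1}^{-1}=\calB^{-k}\cdot t_{\gamma_m}\cdots t_{\gamma_2}$. Conjugating by a homeomorphism sending $\gamma_1$ to an arbitrary non-separating curve $C$ (Change of Coordinates Principle) and using centrality of $\calB$ once more, one obtains for every positive integer $d$ an identity
\[
t_C^{-d}=\calB^{-kd}\cdot T_{C,d}
\]
in $\modgr$, where $T_{C,d}$ is a positive product of Dehn twists along non-separating curves. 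Applying this to each $C_j$ (non-separating by hypothesis) and pulling the central powers of $\calB$ to the left,
\[
\phi = t_{C_1}^{-d_1}\cdots t_{C_b}^{-d_b}\,\phi_1 = \calB^{-k\sum_j d_j}\cdot Q\cdot R = Q\cdot\bigl(\calB^{-k\sum_j d_j}\phi_1\bigr) = Q\cdot\phi_2,
\]
with $Q$ a positive product of Dehn twists along non-separating curves and $\phi_2:=\calB^{-k\sum_j d_j}\phi_1$. The mapping class $\phi_2$ has the same (positive) screw numbers as $\phi_1$ and fractional Dehn twist coefficients $\fr(\phi_2,B_i)=\fr(\phi,B_i)-k\sum_j d_j$, which by \cref{eq:sufficient} are non-negative. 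Provided the inequality is strict for each boundary component, $\phi_2$ is again fully right-veering, so a second application of \Cref{thm:positive_factorization} yields a positive factorization of $\phi_2$, and concatenation with $Q$ produces the desired positive factorization of $\phi$.

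The main obstacle is the bookkeeping in the second step: one must track that the central discrepancies coming from the kernel $\modgr\to\pmodgr$ collect to exactly $\calB^{-k\sum_j d_j}$, so that the combinatorial gain from \cref{eq:sufficient} can be matched precisely against them. A related subtlety is the equality case of \cref{eq:sufficient}, which would make some $\fr(\phi_2,B_i)$ vanish and so put $\phi_2$ just outside the scope of \Cref{thm:positive_factorization}; this needs to be handled by a direct ad hoc argument (for instance by noting that a vanishing fractional Dehn twist coefficient forces the adjacent periodic piece to act trivially near $B_i$, so that no boundary Dehn twist around $B_i$ is needed in the factorization).
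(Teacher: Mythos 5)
Your proposal follows essentially the same route as the paper's proof: form the corrected class $\calB^{-k\sum_j d_j}\prod_j t_{C_j}^{d_j}\phi$ (your $\phi_2$, the paper's $\hat\phi$), verify via \Cref{rem:formula_screw} and \cref{eq:sufficient} that it is fully right-veering, apply \Cref{thm:positive_factorization}, and reassemble $\phi$ by writing each $\calB^{kd_j}t_{C_j}^{-d_j}$ as a positive word using \Cref{thm:positive_fact_boundary} together with the trick in the proof of \Cref{prop:pos_fact}. Two minor remarks: your first invocation of \Cref{thm:positive_factorization} (applied to $\phi_1$) and the symbol $R$ are never actually used, since only the factorization of $\phi_2$ matters; and your observation about the equality case of \cref{eq:sufficient} is a fair criticism that applies equally to the paper's own proof, which deduces strict positivity of the corrected fractional Dehn twist coefficients from the non-strict inequality --- though the ad hoc fix you gesture at would itself need justification, since a vanishing fractional Dehn twist coefficient at $B_i$ does not by itself force $\phi_2$ to restrict to the identity near $B_i$.
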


\begin{proof}
	Recall (see \Cref{rem:formula_screw}) that composing $\phi$ with a right-handed Dehn twist around any curve of an orbit of $\mathcal{C}$ increases the screw number of that orbit by $1$ if the orbit is regular and by $2$ if the orbit is amphidrome.
	
	It follows from \Cref{thm:positive_fact_boundary}  that given a non-separating simple closed curve $\gamma$, there exist a collection of non-separating simple closed curves $\alpha_1, \ldots, \alpha_c$ such that  \begin{equation}\label{eq:pos_fact_boun}
	\calB^k t_{\gamma}^{-1}= t_{\alpha_1} \cdots  t_{\alpha_c}\end{equation} holds in $\modgr$ where $k$ is like in the statement of the present theorem. That is, we can say that a left-handed Dehn twist, {\em costs} a boundary multitwist if $g=1, r <9$ or $g \geq 2,r \leq 2g - 4$ and that it {\em costs} $2$ boundary multitwists if $g \geq 2, r \leq 2g - 4$.
	
	Boundary multitwists commute with every element in $\modgr$ because their support is disjoint from any non-separating simple closed curve. Let $\gamma$ be one of the representatives  of the orbits considered in the statement of the theorem (those that have negative screw numbers and consist of non-separating simple closed curves). Consider $\calB^{-k}t_\gamma\phi$ (where $k$ is as in the hypothesis). Then, invoking the formulas from \Cref{rem:formula_screw} we find
	\begin{enumerate}
		\item that $\fr(\calB^{-k}t_\gamma\phi, B_i) = \fr(\phi, B_i)-k$ for all boundary components $B_i$ and
		\item that $\mathrm{sc}(\calB^{-k}t_\gamma\phi, \gamma) = \mathrm{sc}(\phi, \gamma)+ \beta_j.$ 
	\end{enumerate}
	By the same reasoning, changing $\calB^{-k} t_\gamma$ for $\calB^{-kd_j}t_\gamma^{d_j}$ we get	
	
		\begin{enumerate}[label= \arabic*)]
		\item that $\fr(\calB^{-kd_j}t_\gamma^{d_j}\phi, B_i) = \fr(\phi, B_i)-d_j k$ for all boundary components $B_i$ and
		\item \label{it:screw_in}that $\mathrm{sc}(\calB^{-kd_j}t_\gamma^{d_j}\phi, \gamma) = \mathrm{sc}(\phi, \gamma)+ \beta_j d_j$ where $\beta_j=1$ or $2$ depending whether the orbit in which $\gamma$ lies is regular or amphidrome.
	\end{enumerate} 

	If we do this for a curve $\gamma_1, \ldots, \gamma_b$ in each orbit we get a homeomorphism $$\hat{\phi}:= \left(\calB^{-k \cdot \sum_{j=1}^b d_j} \prod_{j=1}^bt_{\gamma_j}^{d_j} \right) \phi$$ which, by \cref{eq:sufficient}, has positive fractional Dehn twist coefficients. Also by the same reasoning as in \Cref{lem:essential_part} $$\mathrm{sc}(\calB^{-kd_j}t_\gamma^{d_j}\phi, \gamma) = \mathrm{sc}(\phi, \gamma)+ \beta_j d_j>0$$ because $d_j-1$ was the smallest positive number such that $\mathrm{sc}(\calB^{-k(d_j-1)}t_\gamma^{d_j-1}\phi, \gamma)$ has the same sign as the corresponding screw number of $\phi$. Therefore, $\calB^{-kd_j}t_\gamma^{d_j}\phi$ has a positive screw number at the orbit of $\gamma$. 
	
	Hence, by our main theorem \Cref{thm:positive_factorization}, the homeomorphism $\hat{\phi}$ admits a positive factorization. By \cref{eq:pos_fact_boun}, the homeomorphism $\calB^{k d_j} t_{\gamma_j}^{-d_j}$ also admits a positive factorization. Putting these two last facts together, we get that $$ \left( \calB^{k \cdot \sum_{j=1}^b d_j} \prod_{j=1}^bt_{\gamma_j}^{-d_j} \right)\hat{\phi}= \phi$$ admits a positive factorization.
\end{proof}

\begin{remark}\label{rem:cons_non_separating}
	Observe that there is a somewhat strong constraint in \Cref{thm:criterion} which is that we can only give such quantitative results when we are {\em correcting} negative Dehn twists around {\em non-separating} simple closed curves. This is because \Cref{thm:positive_fact_boundary} works with this restriction.
	
	 Observe also that \Cref{prop:pos_fact} implies that a left-handed Dehn twist in $\pmodgr$ around a separating simple closed curve admits a positive factorization. The problem is that we do not know how many powers of boundary twists this factorization produces in $\modgr$. An answer to that question would give a much stronger version of \Cref{thm:criterion}. In the language introduced  in this section, this problem can be summarized in the following important question:
	 
	 \textbf{Question 1.} Let $t_\gamma^{-1}$ be a left-handed Dehn twist around a separating simple closed curve $\gamma$ in $\Si_{g,r}$. This curve $\gamma$ is characterized, up to conjugacy in $\modgr$, by the two surfaces $\Si_{g_0,r_0}$ and $\Si_{g_1,r_1}$ in which it separates the original surface. How to express the correcting exponent (\Cref{def:correcting}) $N_{t_\gamma^{-1}}$ as a function of $g_0,r_0,g_1$ and $r_1$?
	 
\end{remark}

We finish the article by introducing a natural refinement of the correcting exponent which could be useful for future exploration of the set $\veer \setminus \dehn$.

	Let $\phi \in \modgr$ and let $\hat{\phi}$ be its image in $\pmodgr$. We define  $$\Delta_\phi:=\left\{ \psi \in \modgr : \hat \psi = \hat \phi \text{ in $\pmodgr$ and $\psi$ admits a positive factorization}\right\}.$$ We know that this is non-empty in the majority of cases by \Cref{cor:obstruc_dehn}. Index the boundary components  $B_1, \ldots, B_r$ of $\Si_{g,r}$ and denote by $\fr(\phi) \in \QQ^r$ the corresponding vector of fractional Dehn twist coefficients. 

\begin{definition}\label{def:correcting_semigroup}
	Let $\phi \in \modgr$ be pseudoperiodic. The {\bf correcting poset} of $\phi$ is: $$L_{\phi}:= \left\{(a_1, \ldots, a_r) \in \ZZ^r : (a_1, \ldots, a_r) = \fr(\psi \circ \phi^{-1}) \text{ for some } \psi \in \Delta_\phi\right\}.$$ 
\end{definition}

We observe that $L_{\phi}$ is well-defined since $\psi \circ \phi^{-1}$ is the identity outside a collar neighborhood of the boundary and the rational parts of $\fr(\psi)$ and $\fr(\phi)$ coincide because their projection in $\pmodgr$ coincide, that is, $\hat{\psi} = \hat{\phi}$ and the rational part of the fractional Dehn twist coefficient of $\phi$ at $B_i$ coincides with the rotation number of a periodic representative $\hat{\phi} \in \pmodgr$ at $B_i$.

Next we prove some easy properties of correcting posets.

\begin{prop}
Let $\phi \in \modgr$ be pseudoperiodic. Then $L_\phi$ satisfies the following properties:
\begin{enumerate}
	\item \label{it:p_i} It is a partially-ordered set with the relation $(a_1, \ldots, a_r) \leq (a_1', \ldots, a_r)'$ if and only if $a_i \leq a_i'$ for all $i=1, \ldots, r$.
	\item \label{it:p_ii} $\phi$ admits a positive factorization if and only if $(0, \ldots, 0) \in L_\phi$.
	\item \label{it:p_iii} $L_{\tilde{\phi}} \subset L_\phi$.
	\item \label{it:p_iv} If $(a_1, \ldots, a_r) \in L_\phi$ then $(a_1, \ldots,a_i+1, \ldots a_r) \in L_{\phi}$ for any $i=1, \ldots, r$.
\end{enumerate}
\end{prop}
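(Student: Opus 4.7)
The plan is to verify the four properties in increasing order of subtlety. Property (i) is essentially free: the product order on $\ZZ^r$ is obviously reflexive, antisymmetric, and transitive, and $L_\phi \subset \ZZ^r$ inherits this structure. Property (iv) reduces to the monoid property of $\dehn$ under composition with right-handed Dehn twists: if $\psi \in \Delta_\phi$ realizes $(a_1,\ldots,a_r) = \fr(\psi\phi^{-1})$, then, using that boundary Dehn twists commute with every mapping class, $t_{B_i}\psi \in \dehn$ still lies in $\Delta_\phi$ and realizes the coefficient vector $(a_1,\ldots,a_i+1,\ldots,a_r)$.

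For (ii), the key fact is that the kernel of $\modgr \to \pmodgr$ is the free abelian group generated by the boundary Dehn twists $t_{B_1},\ldots,t_{B_r}$. Hence for any $\psi \in \Delta_\phi$ one has $\psi\phi^{-1} = \prod_j t_{B_j}^{c_j}$ with $\fr(\psi\phi^{-1}) = (c_1,\ldots,c_r)$. The inclusion $(0,\ldots,0) \in L_\phi$ thus amounts to the existence of $\psi \in \Delta_\phi$ with $\psi\phi^{-1} = \id$, i.e.\ $\phi = \psi$, which is exactly the condition that $\phi$ itself admits a positive factorization.

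Property (iii) is the heart of the matter. I would pick the representative $\tilde\phi = \phi\cdot M$ from \Cref{lem:essential_part}, where $M := \prod_j t_{B_j}^{n_j}\prod_k t_{C_k}^{m_k}$; a quick check shows $L_\phi$ is invariant under conjugation by elements of $\modgr$ (which fix each boundary component and preserve fractional Dehn twist coefficients), so the ambiguity in the choice of representative is immaterial. Given $(a_i) \in L_{\tilde\phi}$, let $\psi' = (\prod_i t_{B_i}^{a_i})\tilde\phi \in \dehn$ and set $\psi := \psi'M^{-1}$. By commutativity of boundary Dehn twists with everything, $\psi = (\prod_i t_{B_i}^{a_i})\phi$, so $\fr(\psi\phi^{-1}) = (a_1,\ldots,a_r)$; the remaining point is $\psi \in \dehn$, which in turn follows from $M^{-1} \in \dehn$. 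From the formulas $n_j = -\mathrm{int}(\fr(\phi,B_j))$ and $m_k = -\mathrm{int}(\sn(\phi,C_k)/\beta_k)$, the exponents in $M^{-1}$ have signs opposite to those of the corresponding $\fr$ and $\sn$ of $\phi$, so $M^{-1}$ is a product of right-handed Dehn twists whenever $\phi$ has nonnegative fractional Dehn twist coefficients and screw numbers, which is the natural regime for the study of $\veer\setminus\dehn$ pursued in this section.

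The main obstacle I anticipate is precisely the sign analysis in (iii): the argument above works cleanly for right-veering $\phi$ via $M^{-1} \in \dehn$, but for a general pseudoperiodic $\phi$ with some negative $\fr$ or $\sn$ the factor $M^{-1}$ need not itself be positive. In that generality one would need either an implicit restriction to the right-veering setting (consistent with the paper's focus) or a more careful accounting that exchanges the nonpositive twists in $M^{-1}$ for an adjustment of the boundary coefficients recorded in $L_{\tilde\phi}$.
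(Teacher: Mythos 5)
Your arguments for (i), (ii) and (iv) coincide with the paper's in substance. The paper declares (i) clear by definition; your (ii) spells out the observation that $\psi\phi^{-1}$ lies in the kernel of $\modgr \to \pmodgr$, which is free abelian on the boundary twists (the paper leaves this implicit but uses exactly this logic); and your (iv) is verbatim the paper's argument, that $t_{B_i}\psi$ stays in $\Delta_\phi$ and shifts the $i$-th coordinate by one.

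For (iii) you take a genuinely different route. The paper's two-line argument asserts that $\Delta_\phi = \Delta_{\tilde\phi}$ (on the grounds that $\hat\phi = \hat{\tilde\phi}$) and that $\fr(\tilde\phi, B_i) \leq \fr(\phi, B_i)$, and then appeals implicitly to (iv). You instead build the candidate $\psi$ explicitly as $\psi'M^{-1}$ with $\psi' = \bigl(\prod_i t_{B_i}^{a_i}\bigr)\tilde\phi$, check that $\fr(\psi\phi^{-1}) = (a_i)$ and $\hat\psi = \hat\phi$, and reduce the positivity of $\psi$ to $M^{-1} \in \dehn$ via the monoid property of $\dehn$. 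You then correctly identify the gap: $M^{-1}$ has exponents $-n_j = \mathrm{int}(\fr(\phi,B_j))$ and $-m_k = \mathrm{int}(\sn(\phi,C_k)/\beta_k)$, and these are nonnegative only when $\phi$ has nonnegative fractional Dehn twist coefficients and screw numbers, whereas the proposition is stated for a general pseudoperiodic $\phi$. So as written your argument proves (iii) only in the sign-positive regime, and you are right to flag this.

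It is fair to observe, however, that the paper's own justification of (iii) is itself terse and carries very similar implicit restrictions: the asserted identity $\hat\phi = \hat{\tilde\phi}$ requires $\prod_k \hat{t}_{C_k}^{m_k}$ to be trivial in $\pmodgr$, and since the $C_k$ are non-boundary-parallel interior curves whose Dehn twists are nontrivial in $\pmodgr$, this forces every $m_k = 0$, i.e.\ $|\sn(\phi,C_k)| < \beta_k$; likewise $\fr(\tilde\phi,B_i) \leq \fr(\phi,B_i)$ fails once $\fr(\phi,B_i) \leq -1$. So the two proofs have different presentations but comparable unstated hypotheses; your diagnosis of the difficulty in (iii) --- that it hinges on a positivity assumption not explicitly present in the statement --- is a genuine and accurate observation, even though it is the one place your blind reconstruction departs from the paper's route.
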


\begin{proof}
	The first property \cref{it:p_i} is clear by definition. We do not use the lexicographic order (which would induce a total order) because the index of the boundary components is arbitrary.

	The second property \cref{it:p_ii} follows from the definition of $\Delta_{\phi}$: $(0, \ldots, 0) \in L_{\phi} \Leftrightarrow$ there exists $\psi \in \Delta_{\phi}$ such that $\fr(\psi \circ \phi^{-1})=(0, \ldots, 0) \Leftrightarrow \psi = \phi$ in $\modgr$.
	
	The inclusion of \cref{it:p_iii} follows from the fact that $\fr(\tilde\phi,B_i) \leq \fr(\phi,B_i)$ for all boundary components and from the observation that $\Delta_\phi= \Delta_{\tilde{\phi}}$ (because $\hat{\phi} = \hat{\tilde{\phi}}$).
	
		Let $e_i:=(\overbrace{0, \ldots, 0}^{i-1},1, 0, \ldots,0)$. In order to prove \cref{it:p_iv} we show that for any $(a_1, \ldots, a_r) \in L_{\phi}$ and any $i=1, \ldots, r$, the point $(a_1, \ldots, a_r)+ e_i$ is in $L_{\phi}$. Let $\psi \in \modgr$ be such that $\fr (\psi \circ \phi^{-1})=(a_1, \ldots, a_r)$, then  $t_{B_i}\circ\psi \in \Delta_\phi$ because $\widehat{t_{B_i}\circ\psi} = \hat{t}_{B_i} \circ \hat{\psi} =  \hat{\psi}$.
\end{proof}

We finish the article with two questions about correcting posets.

\textbf{Question 2.} Take the convex hull of  $L_\phi$ in $\RR^n$. How many compact faces does this  polyhedron have?

\textbf{Question 3.} Consider the class of posets $\mathcal{L}^s$ that come from monodromies associated with reduced holomorphic map germs defined on isolated complex surface singularities (including plane curve singularities). Consider also the class of posets $\mathcal{L}^+$ corresponding to mapping classes that admit a positive factorization in $\modgr$. The main \Cref{thm:positive_factorization} implies $\mathcal{L}^s \subset \mathcal{L}^+$. Is there an invariant of the poset $\mathcal{L}^+$ that detects elements in $\mathcal{L}^+ \setminus \mathcal{L}^s$?

\bibliographystyle{alpha}
\bibliography{bibliography}

\end{document}